\numberwithin{equation}{section}
\newcommand{\field}[1]{\mathbb{#1}}
\newcommand{\Z}{\field{Z}}
\newcommand{\R}{\field{R}}
\newcommand{\C}{\field{C}}
\newcommand{\N}{\field{N}}
\newcommand{\PP}{\field{P}}
\def\mM{\mathcal{M}}
\def\mO{\mathcal{O}}
\def\mT{\mathcal{T}}
\DeclareMathOperator{\Id}{Id}
\DeclareMathOperator{\tr}{Tr}
\DeclareMathOperator{\td}{Td}
\DeclareMathOperator{\ch}{ch}
\newtheorem{thm}{Theorem}[section]
\newtheorem{lemma}[thm]{Lemma}
\newtheorem{prop}[thm]{Proposition}
\theoremstyle{definition}
\newtheorem{rem}[thm]{Remark}
\theoremstyle{definition}
\newtheorem{defn}[thm]{Definition}
\newcommand{\be}{\begin{eqnarray}}
\newcommand{\ee}{\end{eqnarray}}
\newcommand{\ov}{\overline}
\newcommand{\wi}{\widetilde}
\newcommand{\comment}[1]{}
\def\q \m#1#2{{\raise1pt\hbox{$#1$}\kern-1pt\big/
               \kern-1pt\raise-1pt\hbox{$#2$}}}
\begin{document}

\title{Quillen metrics and branched coverings}
%\vskip 5mm  
 \author{Xiaonan Ma}
\address{Universit\'e Paris Cit\'e,
CNRS, IMJ-PRG, B\^atiment Sophie Germain, UFR de Math\'ematiques, 
Case 7012, 75205 Paris Cedex 13, France}

\email{xiaonan.ma@imj-prg.fr}
\date{\today}

\begin{abstract} We give a formula to compare the Quillen 
metrics associated to a branched covering from holomorphic line 
bundles.
\end{abstract}

% {\bf R\'esum\'e --} Dans cette note, pour un rev\^etement de  branche, 
% on donne une formule  pour comparer les m\'etriques de Quillen 
%associ\'ees  \`a un fibr\'e holomorphe et son image directe.

\maketitle
\setcounter{section}{-1}

%%%%%%%%%%%%%%%%%%%%%%%%%%%%%%
\section{Introduction} \label{s0}
%%%%%%%%%%%%%%%%%%%%%%%%%%%%%%%%

The Quillen metric is a metric on  the determinant of 
the cohomology of a holomorphic vector bundle  over a complex manifold. 
It is the product of $L^{2}$-metric and the analytic torsion which is 
the regularized determinant of the Kodaira Laplacian.
By Quillen, Bismut, Gillet  and  Soul\'e, we know that the Quillen metric has 
very nice %interesting 
proprieties.

Let $i: Y \hookrightarrow X$ be an immersion of compact complex 
manifolds. Let $\eta$ be a holomorphic Hermitian vector bundle over $Y$. 
Let $\xi$ be a holomorphic resolution of $\eta$ over $X$. 
Bismut and Lebeau \cite{BL91} have calculated the relation  of the 
Quillen metrics associated to $\eta$ and $\xi$.

Let $\pi: W \to S$ be a holomorphic  map of compact complex manifolds. 
Let $\xi$ be a holomorphic Hermitian vector bundle over $W$. 
Let $R^\bullet  \pi _*\xi$ be  the direct image of $\xi$. 
Let $\lambda(\xi)$ and $\lambda(R^\bullet  \pi _*\xi)$ be the inverses of
the determinant of  the cohomology of  $\xi$ and $R^\bullet  \pi _*\xi$. 
By \cite{KnudsenMumford},
$\lambda(\xi) \simeq \lambda(R^\bullet  \pi _*\xi)$. If $\pi$ 
is a submersion, 
Berthomieu  and Bismut \cite{BerB94} have compared the corresponding 
Quillen metrics on $\lambda(\xi)$ and $\lambda(R^\bullet  \pi _*\xi)$.

Suppose now that $W, S$ are arithmetic varieties  over Spec($\Z$). 
Let $\xi$ be an algebraic vector bundle on $W$. In  \cite{GS91}, 
Gillet and Soul\'e conjectured that 
an arithmetic Riemann-Roch formula holds. 
In  \cite{GS92}, by using the Bismut-Lebeau embedding formula
for Quillen metrics \cite{BL91}, they proved it for the first 
arithmetic Chern class. By using Bismut's  work \cite{B97}, 
the family version of \cite{BL91},  Gillet-Roessler-Soulé \cite{GRS08}
show that the arithmetic Riemann-Roch formula in higher degrees holds.
%the   Riemann-Roch Grothendieck formula holds.

In \cite{B94}, Bismut has  conjectured   an equivariant arithmetic 
 Riemann-Roch  formula. In \cite{B95}, he was able to show
 the compatibility of his conjecture with immersions. 
In \cite{KRo01}, K\"ohler and Roessler have obtained a version of 
Bismut's  conjecture by using  \cite{B95}.
For more recent works in this direction, cf.
\cite{KingsRo17, MRo09,Tang21}.

In this paper, we will compare the Quillen 
metrics on $\lambda(\xi)$ and $\lambda(R^\bullet  \pi _*\xi)$
in the case that $\pi$  is a branched covering from a holomorphic 
line bundle.
For any holomorphic line bundle over a compact K\"ahler manifold $S$, 
we give a general construction of a smooth submanifold 
$W\subset L$ (cf. (\ref{eq:1.1})) from holomorphic sections of the 
powers of $L$ on $S$ such that $\pi_{W}: W\to S$ the projection from 
$W$ on $S$, is a branched covering. We obtain the analogue of
the result of Berthomieu-Bismut \cite[Theorem 0.1]{BerB94}
and its equivariant version  \cite[Theorem 3.1]{Ma002}
in this situation.
In fact, our first  result, Theorem \ref{t3.2}  is compatible with 
the arithmetic Riemann-Roch formula. 
Our  second result, Theorem  \ref{t4.1} fits
perfectly well with Bismut's  conjecture.

This paper is organized  as follows. %by the following.
In Section \ref{s1}, we construct a 
branched covering  from a holomorphic line bundle. In Section \ref{s2}, 
we describe  the canonical sections of determinant lines. 
In Section \ref{s3}, applying the Bismut-Lebeau embedding formula
\cite[Theorem 0.1]{BL91},  we calculate 
the Quillen norm of canonical section.
In Section \ref{s4}, using  the Bismut equivariant embedding formula 
\cite[Theorem 0.1]{B95}, we calculate 
the equivariant Quillen norm of the canonical section. 

\noindent{\bf Acknowledgments}.  I'm very much indebted to Professor 
Jean-Michel Bismut for very helpful discussions and suggestions.
This paper was written some time ago, it has remained unpublished.
The author is indebted to Esteban Gomezllata Marmolejo
who informs us that our result is useful for his thesis \cite{Gom22}.
 This motives us to deliberate it.

%%%%%%%%%%%%%%%%%%%%%%%%%%%%%%%
\section{Branched coverings}\label{s1}

{$\quad $} Let $S$ be a compact complex  %K\"ahler 
manifold. 
Let $L$  be a holomorphic line bundle on $S$. 

Let $\alpha_i \in H^0(S,L^i)$ ($1\leq i\leq d$, $d\geq 2, d\in   \N^{*}$).
For $(x,t) \in L$, $x \in S$, set
\begin{align}\label{eq:1.1}\begin{split}
F(\alpha)(x,t)&= t^d + \sum_{i=1}^d \alpha_i (x)  t^{d-i},\\
W&= \Big \{ (x,t)\in L:  F(\alpha)(x,t) = 0 \Big \}.
\end{split}\end{align}
We suppose  that $W$ is smooth.

Let $V= \PP(L\oplus 1)$ the projectivisation of the vector bundle 
$L\oplus \C$, here $\C$ is the trivial line bundle on $S$.
We identify $S$ with $\{ (x, (0,1))\in V: x\in S\}\subset V$.
Let  $\pi: V\to S$ be the natural projection with fibre $Y$. 
The complement of $\PP(L)\simeq S$ in $V$ is canonical isomorphic   to $L$, 
so we can identify $W$ to  a sub-manifold of $V= \PP(L\oplus 1)$. 
Let  $\pi_W : W\to S$ be 
the projection induced by $\pi$. Then $W$ is a branched  covering of $S$ 
of degree $d$. 

Let $\xi$ be a holomorphic vector  bundle on $S$. Let
\begin{align}\label{eq:1.1a}
	\xi'= \pi^*_W \xi
\end{align}	
be the pull-back of the bundle $\xi$ on $W$.
Let $R^\bullet  \pi _{W*}\xi'$,
 $R^\bullet  \pi _{W*} \mO _W$ be the direct images of $\mO _W(\xi')$, 
$\mO _W$, the sheaves of holomorphic sections of $\xi'$,
and of holomorphic functions on $W$, respectively.
By  \cite[Theorem 2.4.2]{GrauertRemmert84},
 $R^\bullet  \pi _{W*} \mO _W = R^0  \pi _{W*} \mO _W$
is locally free of rank $d$ on $S$. By \cite[Exercise 3.8.3]{Hartshorne77}, 
we have 
\begin{align}\label{eq:1.2}
R^\bullet  \pi _{W*}\xi'= R^0  \pi _{W*} \mO _W \otimes \xi.
\end{align}
Let $H^\bullet(W,\xi')=\bigoplus_{j=1}^{\dim W} H^{j}(W,\xi')$, 
$H^\bullet(S, R^0  \pi _{W*}\xi')$
be the cohomology groups  of $\mO _W(\xi')$ on $W$, 
$\mO_{S}(R^0  \pi _{W*}\xi')$ on $S$, respectively.

For a complex vector space  $E$, the determinant line of $E$ is 
%we denote  by
the complex line
\begin{align}\label{eq:1.2a}
	\det E= \Lambda^{\text{max}}E.
\end{align}	
%as the determinant line of $E$.

\begin{defn}\label{d1.1} Set
\begin{align}\label{eq:1.3}\begin{split}
\lambda(\xi')&=\bigotimes_i \Big(\det H^i(W,\xi')\Big)^{(-1)^{i+1}},\\
\lambda(R^\bullet  \pi _{W*}\xi')&= \bigotimes_i 
\Big(\det H^i(S, R^0  \pi _{W*}\xi')\Big)^{(-1)^{i+1}}.
\end{split}\end{align}
\end{defn}
By \cite{KnudsenMumford}, we have the canonical isomorphism 
$\lambda(\xi')\simeq \lambda(R^\bullet  \pi _{W*}\xi')$.
Let $\sigma$ be the canonical section of 
$\lambda(\xi') \otimes \lambda^{-1} (R^\bullet  \pi _{W*}\xi')$.

{\bf Example}: Let $\C\PP ^n$ be the complex projective space 
of dimension $n$. 
Let  $(z_0, \cdots, z_n) = (z_0, z)$ be the  homogeneous  coordinate.
 Let $S= \{z_0=0\}$ $\hookrightarrow    \C \PP ^n$.
 Let $W$ be a hypersurface of degree $d$ which doesn't  contain 
the point $(1,0)$. Let $\pi : W\to S$ be the projection from $(1,0)$.
Let $L= \mO _S (1)$ be the hyperplane line bundle on $S$.
By \cite[p167]{GriffithsHarris}, we can reduce this 
to the situation (\ref{eq:1.1}).

\begin{rem}\label{r1.2} 
	Let $\pi: S_1\to S_2$ be a finite mapping of Riemann 
surfaces of degree $n$. Let $\mM (S_1), \mM (S_2)$ be the meromorphic
 function fields on $S_1, S_2$. Then $\pi$ caracterize by the finite field
 extension $\mM (S_2) \hookrightarrow \mM (S_1)$ 
 \cite[\S 2.11]{Shokurov94}. 
So $S_1, \pi$ is constructed   by an irreducible polynomial 
\begin{align}\label{eq:1.4}
P(T)= T^n + c_1 T^{n-1} + \cdots + c_n \in \mM (S_2) (T).
\end{align}
So our construction contains a large part of general map of 
Riemann surfaces.
\end{rem}

Let $\imath: S\to V$, $\jmath: W\to V$ be the natural immersions.
%Let $\pi_1$ be the canonical projection from $L$ to $S$. 
  Let $\mO _V(-1)$ be the universal 
line bundle over $V$. Let $\mO _V(k)= \mO _V(-1)^{\otimes -k}$.
On $V$, we have the exact sequence of holomorphic vector bundles
 \cite[(1.21)]{B97b},
\begin{align}\label{eq:1.5}
0 \to \mO _V(-1)  \stackrel{a}{\to}  \pi^* L \oplus \C  \stackrel{a}{\to} 
\frac{\pi^* L \oplus \C}{\mO _V(-1) } \to 0.
\end{align}
Let $\tau_{[S]}(y)\in \Big(\frac{\pi^* L \oplus \C}{ \mO _V(-1) }\Big)_{y}$ 
be given by
\begin{align}\label{eq:1.6}
\tau_{[S]}(y) = a_{y}(0,-1).
\end{align}
Then $\tau_{[S]}$ is a holomorphic section of 
$\frac{\pi^* L \oplus \C}{\mO _V(-1) }$ which vanishes exactly on $S$.
The map $\theta: \pi^* L \to \frac{\pi^* L \oplus \C}{\mO _V(-1) }$
 induced by the projection from $\pi^* L \oplus \C$ is an isomorphism on 
$L\subset \PP(L\oplus 1)$.
 Under this identification, $\tau_{[S]}$ is the tautological
 section of $\pi^* L $ on $L$.
We have 
\begin{align}\label{eq:1.6a}
\mbox{div} (\tau_{[S]}) = {S}. 
\end{align}
Let $\sigma_{[S]}$ be the canonical section of $[S]$
on  $\PP(L\oplus 1)$. 
Then $\sigma_{[S]}^{-1} \otimes \tau_{[S]}$ 
is  a nonzero section of 
$ [S]^{-1} \otimes \frac{\pi^* L \oplus \C}{\mO _V(-1) }$.
We identify the line bundle $[S]$ to 
$\frac{\pi^* L \oplus \C}{\mO _V(-1) }$ via this section.
In particular, we get
\begin{align}\label{eq:1.7a}
	[S] |_{S}= L.
\end{align}
The exact sequence \eqref{eq:1.5}  induces also an isomorphism 
\begin{align}\label{eq:1.7}
[S] \simeq \pi^* L \otimes \mO _V (1).
\end{align}

\begin{rem}\label{r1.3}  If the linear system $|L^d|$ hasn't any base points,
 then for the generic elements
$\alpha_i \in H^0(S,L^i)$ ($1\leq i\leq d$), $W$ is smooth.

In fact, let $\nu$ be the holomorphic section of $\mO _V(1)$ defined 
by $(0,1) \in (\pi^* L\oplus \C)^*$, then 
\begin{align}\label{eq:1.8}
\mbox{div} [\nu] = \PP(L). 
\end{align}
By \eqref{eq:1.6}, for $c\in   \C $, 
$\alpha_i \in H^0(S,L^i)$ ($1\leq i\leq d$), put
\begin{align}\label{eq:1.9}
G(\alpha, c) = c \tau_{[S]}^d + 
\sum_{i=1}^d \alpha_i(x) \nu^i \tau_{[S]}^{d-i},
\end{align}
then $\{ G(\alpha, c): \alpha_i \in H^0(S,L^i),  
1\leq i\leq d, c\in   \C  \}$ is a linear system of $[dS]$ on $V$,
 and the base locus of this system is empty.
By Bertini's Theorem  \cite[p137]{GriffithsHarris}, 
$\{G(\alpha,1)=0\}\subset \PP(L\oplus 1)$ is smooth for generic 
elements $\alpha_i \in H^0(S,L^i)$.
If we identify $\pi^* L$ to $[S]$ on $L$ 
as above, then $G(\alpha,1)= F(\alpha)$, so we obtain our Remark.
\end{rem}

%%%%%%%%%%%%%%%%%%%%%%%%%%%%%%%%%%%%%%
 \section{Canonical isomorphisms of determinant lines}\label{s2}

{$\quad$} By \eqref{eq:1.1}, we can extend $F(\alpha)$ to  
a meromorphic section of $\pi^* L^d$ on $V$.
Let $t: L \to \pi^* L$ be the tautological section of $\pi^* L$ 
on $L\subset \PP(L\oplus 1)=V$. Then $t$ extends naturally 
to a meromorphic section of $\pi^* L$ on $V$. Set 
\begin{align}\label{eq:2.1}\begin{split}
f(\alpha) = F(\alpha) /t^d  .
\end{split}\end{align}
Then $f(\alpha)$ is  a meromorphic function on $V$, and 
\begin{align}\label{eq:2.2}
\mbox{div} (f(\alpha)) =  W- d \cdot S.
\end{align}
Let $\delta_{\{W\}}, \delta_{\{S\}}$ be the currents on $V$ defined by 
the integration on $W,S$.
By \eqref{eq:2.2}, we have 
\begin{align}\label{eq:2.3}
\frac{ \overline{\partial} \partial}{2i\pi} \log |f(\alpha)|^2 
= \delta_{\{W\}} - d\, \delta_{\{S\}}.
\end{align}
We will identify the line bundle $[W]$ to $[dS]$ via $f(\alpha)$. 
Let $\tau_{[W]}$ be the canonical section of $[W]$ on $V$, then 
\begin{align}\label{eq:2.4}
\tau_{[W]}= f(\alpha)\tau_{[S]}^d.
\end{align}

Let $TY= TV/S$ be the holomorphic tangent bundle to the fibre $Y$. 
By \eqref{eq:1.4}, as in \cite[p409]{GriffithsHarris}, 
we have  an exact sequence of holomorphic vector bundles on $V$, 
\begin{align}\label{eq:2.5}
0 \to   \C  \to (\pi^* L \oplus \C) \otimes \mO _V (1) \to TY \to 0.
\end{align}
Let $K_Y= T^*Y $ be the relative canonical bundle on $V$. 
By \eqref{eq:2.5}, 
\begin{align}\label{eq:2.6}
K_Y \simeq \pi^* L^{-1} \otimes \mO _V (-2).
\end{align}

\begin{prop} \label{p2.1} For $k>0$, we have canonical identifications
 \begin{align} \label{eq:2.7}\begin{split}
&R^0 \pi_* \mO _V= \C,\qquad \qquad 
R^0 \pi_* \mO _V (-k)=0,\\
&R^1 \pi_* \mO _V (-1)= R^1 \pi_* \mO _V =0,\\
&R^1 \pi_* \mO _V (-(k+1))= \bigoplus_{i=1}^{k} L^i.
\end{split}\end{align}
\end{prop}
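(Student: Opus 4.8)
The plan is to reduce everything to the cohomology of line bundles on the fibres $Y\cong\PP^{1}$ and then to identify the resulting direct images on $S$ by relative Serre duality. Since $V=\PP(L\oplus\C)$ is a $\PP^{1}$-bundle over $S$ and, by \eqref{eq:1.5}, $\mO_{V}(-1)$ is the tautological subbundle of $\pi^{*}(L\oplus\C)$, its restriction to a fibre $Y_{x}=\PP((L\oplus\C)_{x})$ is the tautological bundle $\mO_{\PP^{1}}(-1)$; hence $\mO_{V}(-k)|_{Y_{x}}\cong\mO_{\PP^{1}}(-k)$ for all $k$.

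First I would record the fibrewise cohomology: $H^{0}(\PP^{1},\mO(-k))=0$ for $k>0$, $H^{0}(\PP^{1},\mO)=\C$, and $H^{1}(\PP^{1},\mO(-j))=0$ for $j\leq 1$, while $\dim H^{1}(\PP^{1},\mO(-(k+1)))=k$. Because these dimensions are constant along $S$, Grauert's cohomology-and-base-change theorem shows that each $R^{q}\pi_{*}$ in question is locally free of the indicated rank and commutes with base change. This gives at once the first four identities in \eqref{eq:2.7} and the rank $k$ of $R^{1}\pi_{*}\mO_{V}(-(k+1))$; what remains is to pin down the latter's bundle structure.

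For that I would apply relative Serre duality along the $\PP^{1}$-fibres, $R^{1}\pi_{*}\mF\cong\big(\pi_{*}(\mF^{\vee}\otimes K_{Y})\big)^{\vee}$, to $\mF=\mO_{V}(-(k+1))$. By \eqref{eq:2.6}, $K_{Y}\cong\pi^{*}L^{-1}\otimes\mO_{V}(-2)$, so $\mF^{\vee}\otimes K_{Y}\cong\pi^{*}L^{-1}\otimes\mO_{V}(k-1)$. Next I would invoke the standard identification $\pi_{*}\mO_{V}(m)\cong\mathrm{Sym}^{m}(L^{-1}\oplus\C)=\bigoplus_{j=0}^{m}L^{-j}$ for $m\geq 0$, which comes from the surjection $\pi^{*}(L^{-1}\oplus\C)\to\mO_{V}(1)$ dual to \eqref{eq:1.5} and is fibrewise the canonical isomorphism $\mathrm{Sym}^{m}E_{x}^{*}\cong H^{0}(\PP^{1},\mO(m))$. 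Combining this with the projection formula gives $\pi_{*}(\pi^{*}L^{-1}\otimes\mO_{V}(k-1))=L^{-1}\otimes\bigoplus_{j=0}^{k-1}L^{-j}=\bigoplus_{i=1}^{k}L^{-i}$, and dualizing yields $R^{1}\pi_{*}\mO_{V}(-(k+1))\cong\bigoplus_{i=1}^{k}L^{i}$, as claimed.

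The routine part is the fibrewise $\PP^{1}$-cohomology; the step demanding care is keeping the projective-bundle conventions consistent, namely whether $\mO_{V}(-1)$ is the tautological sub- or quotient bundle and whether $\pi_{*}\mO_{V}(m)$ is a symmetric power of $L\oplus\C$ or of its dual. Getting a single dual wrong here would replace $\bigoplus_{i=1}^{k}L^{i}$ by $\bigoplus_{i=1}^{k}L^{-i}$. I would therefore fix the convention from \eqref{eq:1.5} at the outset and cross-check the final rank-$k$ bundle against the fibrewise dimension count supplied by base change.
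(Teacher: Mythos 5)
Your proof is correct and follows essentially the same route as the paper: the vanishing statements are reduced to fibrewise $\PP^{1}$-cohomology, and the last identity is obtained by Serre duality along the fibres together with $K_{Y}\simeq\pi^{*}L^{-1}\otimes\mO_{V}(-2)$ and the identification $\pi_{*}\mO_{V}(m)\simeq\mathrm{Sym}^{m}((L\oplus\C)^{*})=\bigoplus_{j=0}^{m}L^{-j}$. The only difference is that you make explicit the Grauert base-change step that the paper leaves implicit; your convention checks and the final answer $\bigoplus_{i=1}^{k}L^{i}$ agree with \eqref{eq:2.8}--\eqref{eq:2.9}.
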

\begin{proof}  The first two equations are  trivial.

Using the Serre duality  \cite[p240]{Hartshorne77} and \eqref{eq:2.6}, 
for $m\in \Z$, we have 
\begin{align} \label{eq:2.8}\begin{split}
R^1 \pi_* \mO _V (-m)
\simeq (H^0(Y,  \mO _V (m) \otimes K_Y))^*\\
\hspace*{15mm} = L\otimes  (H^0(Y,  \mO _V (m-2)))^*.
\end{split}\end{align}
The second equation of (\ref{eq:2.7}) and (\ref{eq:2.8})
imply the third equation of (\ref{eq:2.7}).

For $k>0$, by \cite[p165]{GriffithsHarris}, we have 
\begin{align}\label{eq:2.9}
H^0(Y,  \mO _V (k-1)) = \mbox{Sym}^{k-1} ((L\oplus \C)^*) 
= \bigoplus_{i=0}^{k-1} L^{-i}.
\end{align}
By \eqref{eq:2.8} and \eqref{eq:2.9}, we get the last equation of 
\eqref{eq:2.7}.
\end{proof}

\begin{prop} \label{p2.2} We have a canonical  isomorphism,
\begin{align}\label{eq:2.10}
R^0\pi_{W*} \mO _W \simeq \bigoplus _{j=0}^{d-1} L^{-j}.
\end{align}
\end{prop}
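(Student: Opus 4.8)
The plan is to write down a canonical morphism $\Phi\colon \bigoplus_{j=0}^{d-1} L^{-j}\to R^0\pi_{W*}\mO_W$ by hand and then check it is an isomorphism locally over $S$. The key observation is that the tautological section $t$ of $\pi^*L$ on $L$ restricts to a global holomorphic section $t|_W$ of $\pi_W^*L$ on $W$, so that $t^j|_W$ is a global holomorphic section of $\pi_W^*L^j$ for every $j\ge 0$. Consequently, for an open set $U\subset S$ and a holomorphic section $s$ of $L^{-j}$ over $U$, the product $s\,t^j$ is an honest holomorphic function on $\pi_W^{-1}(U)$, since $L^{-j}\otimes \pi_W^*L^j=\mO_W$. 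This gives an $\mO_S$-linear map $L^{-j}\to R^0\pi_{W*}\mO_W$, $s\mapsto s\,t^j$, and summing these over $0\le j\le d-1$ defines $\Phi$.

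To see that $\Phi$ is an isomorphism I would argue locally. Recall from the discussion after \eqref{eq:1.1} that $R^0\pi_{W*}\mO_W$ is locally free of rank $d$, and the source of $\Phi$ is clearly locally free of rank $d$ as well. Over an open $U\subset S$ on which $L$ is trivial, $t$ becomes a genuine fibre coordinate and \eqref{eq:1.1} presents $\pi_W^{-1}(U)$ as the relative spectrum $\spec_U\big(\mO_U[t]/(F(\alpha))\big)$, whence $R^0\pi_{W*}\mO_W|_U=\mO_U[t]/(F(\alpha))$. Under the trivialisation $\Phi|_U$ is exactly the identification of $\bigoplus_{j=0}^{d-1}\mO_U\cdot t^j$ with $\mO_U[t]/(F(\alpha))$, which is an isomorphism because $F(\alpha)$ is monic of degree $d$: the division algorithm shows $1,t,\dots,t^{d-1}$ is an $\mO_U$-basis. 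As $\Phi$ is globally defined these local isomorphisms glue, and \eqref{eq:2.10} follows.

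As an independent check — and to see the graded pieces $L^{-j}$ emerge from the ambient geometry — I would instead push the divisor sequence $0\to \mO_V(-W)\to \mO_V\to \mO_W\to 0$ down to $S$. By \eqref{eq:2.4} and \eqref{eq:1.7} we have $[W]\simeq[dS]\simeq \pi^*L^d\otimes \mO_V(d)$, hence $\mO_V(-W)\simeq \pi^*L^{-d}\otimes \mO_V(-d)$. Combining the projection formula with Proposition \ref{p2.1} gives $\pi_*\mO_V(-W)=0$, $R^0\pi_*\mO_V=\mO_S$, $R^1\pi_*\mO_V=0$, and $R^1\pi_*\mO_V(-W)=L^{-d}\otimes\bigoplus_{i=1}^{d-1}L^{i}=\bigoplus_{j=1}^{d-1}L^{-j}$, so the long exact sequence of $\pi_*$ collapses to
\[
0\to \mO_S\to R^0\pi_{W*}\mO_W\to \bigoplus_{j=1}^{d-1}L^{-j}\to 0 .
\]
This reproduces the summands of \eqref{eq:2.10}, and the morphism $\Phi$ furnishes a canonical splitting.

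The main thing to get right is the twisting bookkeeping: $t^j$ is a section of $\pi_W^*L^j$, not a function, and it is precisely the pairing against $L^{-j}$ that yields an element of $\mO_W$; this is what pins down the summands as $L^{-j}$ rather than $L^{j}$. Beyond this, the only non-formal input is the local freeness (equivalently, flatness of the finite covering $\pi_W$), which guarantees that $W$ is fibrewise presented by the monic relation \eqref{eq:1.1}; the remainder is the division algorithm.
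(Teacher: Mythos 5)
Your proposal is correct and takes essentially the same route as the paper: the paper's proof simply cites Grauert--Remmert to identify $R^0\pi_{W*}\mO_W$ with the sheaf of polynomial functions of degree $\leq d-1$ along the fibre of $L$, which is exactly your map $s\mapsto s\,t^j$ together with the monic-division argument (compare \eqref{eq:2.22}, where the paper records precisely this description). Your supplementary check via pushing down the divisor sequence is also not new to the paper --- it is carried out separately in \eqref{eq:2.13}--\eqref{eq:2.14} and Proposition \ref{p2.3} --- but it is consistent and correctly bookkeeps the twist $\mO_V(-W)\simeq\pi^*L^{-d}\otimes\mO_V(-d)$.
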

\begin{proof} 
By \cite[\S 2.4]{GrauertRemmert84}, we can identify 
$R^0\pi_{W*} \mO _W$  as the sheaf of polynomial functions along the 
fiber $L$ with degree $\leq d-1$, thus we get 
(\ref{eq:2.10}).
% \begin{align}\label{eq:2.11}
% R^0\pi_{W*} \mO _W \simeq 
% \mbox{Polynomial}_{\leq d-1} (L) =  \bigoplus _{j=0}^{d-1} L^{-j}.
% \end{align}
\end{proof}

Using \cite[Exercise 3.8.3]{Hartshorne77}, \eqref{eq:1.7}, 
\eqref{eq:2.7} and \eqref{eq:2.10}, for $k \geq 2$, we get
\begin{align}\label{eq:2.12} \begin{split}
R^{\bullet} \pi_* \mO _V ([-S])& =0,\\
R^0 \pi_* \mO _V ([-kS])& =0, \quad 
R^1 \pi_* \mO _V ([-kS]) = \bigoplus_{j=1}^{k-1} L^{-j},\\
R^{0}\pi_{W*} \xi' & \simeq  \bigoplus_{j=0}^{d-1} L^{-j}\otimes \xi.
\end{split}\end{align}
Note that we identify $[W]$ with $[dS]$ via (\ref{eq:2.4}), by
(\ref{eq:2.12}), we have 
\begin{align}\label{eq:2.14a}
R^0\pi_* \mO _V ([-W])= 0, \quad 
R^1\pi_* \mO _V ([-W])= \bigoplus_{j=1}^{d-1} L^{-j}.
\end{align}

We have the following exact sequence of sheaves over $V$
\begin{align} \label{eq:2.13}
0 \to \mO _V([-W]) \stackrel{\tau_{[W]}}{\to} \mO _V 
\to \jmath_* \mO _W \to 0.
\end{align}
By \eqref{eq:2.7}, \eqref{eq:2.13}, we get the following exact 
sequence of sheaves on $S$
\begin{align}\label{eq:2.14}
 0\to  R^0 \pi_* \mO _V \stackrel{\jmath}{\to}  R^0 \pi_{W*} \mO _W 
 \stackrel{\delta_1}{\to}
 R^1\pi_* \mO _V ([-W]) \to  0.
\end{align}

\begin{prop} \label{p2.3} Under the canonical identification
	\eqref{eq:2.10}, 	\eqref{eq:2.14a}, 
	the exact sequence \eqref{eq:2.14} is canonically  split.
 Let $\delta: \bigoplus_{i=1}^{d-1} L^{-i} \to  R^1\pi_* \mO _V ([-W])$ 
be the map  induced by $\delta_1$ and \eqref{eq:2.10}, then   
under the decomposition $L^{-1}\oplus \cdots \oplus L^{-d+1}$,
we have 
\begin{align} \label{eq:2.15}
\delta^{-1} = (a_{ij})=\left (  \begin{array}{lll}
1& &{*}\\
&\ddots&\\
  0&&1 \end{array}\right ).
\end{align}
Moreover  
\begin{align} \label{eq:2.16}\begin{split}
a_{ij}= \alpha_{j-i} \quad \mbox{if} \quad j>i,\\
\hspace*{11mm} 1 \quad \mbox{if}\quad  i=j,\\
\hspace*{11mm} 0 \quad \mbox{if}\quad  j<i.
\end{split}\end{align}
\end{prop}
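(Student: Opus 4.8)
The plan is to compute the connecting morphism $\delta_1$ of the long exact sequence attached to \eqref{eq:2.13} fibrewise and explicitly. Over a small open $U\subset S$ I trivialise $L$, so that $V|_U\cong U\times\PP^1$ with fibre coordinate $t$ (the tautological section of $\pi^*L$), with $S=\{t=0\}$, and with $F=F(\alpha)=t^d+\sum_i\alpha_i t^{d-i}$ the canonical equation of $W$. I cover each fibre by $U_0=\{t\neq\infty\}$ (containing $S$) and $U_\infty=\{t\neq 0\}$ (containing $t=\infty$) and compute $\check{H}^1(\PP^1,\cdot)$ with this two–chart cover. Under \eqref{eq:2.10} a local section of $R^0\pi_{W*}\mO_W$ is a polynomial $g=\sum_{j=0}^{d-1}c_j t^j$ with $c_j$ a section of $L^{-j}$, and the sub $R^0\pi_*\mO_V=\C$ of \eqref{eq:2.14} is exactly the constant term $j=0$; thus the decomposition \eqref{eq:2.10} already exhibits $\bigoplus_{i=1}^{d-1}L^{-i}$ as a canonical complement, which will give the splitting of \eqref{eq:2.14} once $\delta_1$ is seen to restrict to an isomorphism there. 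As everything is $\mO_S$–linear it suffices to follow one monomial $c_i t^i$.

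To evaluate $\delta_1(g)$ I lift $g$ to $\tilde g_0:=g$ on $U_0$ and to any $\tilde g_\infty$ holomorphic at $t=\infty$ agreeing with $g$ on $W\cap U_\infty$; the \v{C}ech cocycle is $g-\tilde g_\infty$, a section of $\mO_V([-W])$ on $U_0\cap U_\infty$. Transporting through the identification $[W]\cong[dS]$ of \eqref{eq:2.4}, i.e. multiplying by $f(\alpha)^{-1}=t^d/F$, produces the cocycle $\psi=(g-\tilde g_\infty)\,t^d/F$ in $\mO_V([-dS])$. For this cover $\check{H}^1(\PP^1,\mO_V([-dS]))$ is the space of Laurent tails modulo $\{t^{\ge d}\}+\{t^{\le 0}\}$, with basis $t^{1},\dots,t^{d-1}$; since $\tilde g_\infty\,t^d/F$ involves only non–positive powers of $t$ it vanishes in the quotient, so the class of $\psi$ is the positive–power truncation of $g\,t^d/F$. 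Writing $1/f(\alpha)=(1+\sum_{i\ge1}\alpha_i t^{-i})^{-1}=\sum_{n\ge0}\beta_n t^{-n}$ (so $\beta_0=1$ and $\beta_n=-\sum_{k\ge1}\alpha_k\beta_{n-k}$), this truncation applied to $c_i t^i\cdot t^d/F$ equals $c_i\sum_{k=1}^{i}\beta_{i-k}\,t^{k}$.

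It remains to identify the \v{C}ech class $t^{k}$ with the summand $L^{-k}$ of \eqref{eq:2.14a} and to invert the resulting matrix. Here I specialise to $\alpha=0$: then $f=1$, the identification \eqref{eq:2.14a} is precisely that of \eqref{eq:2.12} for $[dS]$, the construction is equivariant for the fibrewise $\C^*$–scaling $t\mapsto\lambda t$, and the computation gives $\delta_1(c_i t^i)=c_i t^i$; comparing $\C^*$–weights (both $t^k$ and $L^{-k}$ have weight $-k$) forces $t^{k}\leftrightarrow L^{-k}$ and $\delta=\Id$ in this case. For general $\alpha$ the calculation then shows that, in the bases $(L^{-i})_{i=1}^{d-1}$, the map $\delta$ is the unipotent upper–triangular Toeplitz matrix with symbol $B(x)=\sum_n\beta_n x^n=(1+\sum_i\alpha_i x^i)^{-1}$. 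In particular $\delta$ is invertible, giving the asserted canonical splitting, and $\delta^{-1}$ is the Toeplitz matrix with the inverse symbol $A(x)=1+\sum_i\alpha_i x^i$; reading off coefficients yields $\delta^{-1}=(a_{ij})$ with $a_{ii}=1$, $a_{ij}=\alpha_{j-i}$ for $j>i$ and $0$ for $j<i$, as in \eqref{eq:2.15}--\eqref{eq:2.16}. The step I expect to be the main obstacle is exactly this matching of the explicit \v{C}ech representatives with the canonical identification \eqref{eq:2.14a}: one must keep careful track of the twist by $f(\alpha)$ in passing from $[W]$ to $[dS]$ and of the placement $S=\{t=0\}$ of the divisor; the Toeplitz inversion $B=1/A$ is then purely formal.
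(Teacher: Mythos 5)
Your overall strategy coincides with the paper's: both proofs compute the connecting map of \eqref{eq:2.13} by fibrewise \v{C}ech cohomology for the two-chart cover $\{t\neq\infty\}$, $\{t\neq 0\}$, representing $R^0\pi_{W*}\mO_W$ by polynomials $\sum_j\gamma_jt^j$ of degree $<d$ and the $R^1$ by Laurent tails. The only substantive difference in the computation itself is its direction: you compute $\delta_1$ forward, obtaining the unipotent triangular Toeplitz matrix with symbol $f(\alpha)^{-1}$, and then invert it formally, whereas the paper applies $\tau_{[W]}=F(\alpha)$ to the \v{C}ech representative $\gamma_{d-j}t^{-j}$ of the target, splits the result into its $U_2$-holomorphic and $U_1$-holomorphic parts, and reads off $\delta^{-1}$ directly from \eqref{eq:2.24}, with no inversion needed. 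The two computations agree, and your observation that the splitting of \eqref{eq:2.14} comes from the factor $\C$ in \eqref{eq:2.10} is also the paper's.

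There is, however, a genuine gap, located exactly at the point you flag as ``the main obstacle'', and your proposed fix does not close it. The identification \eqref{eq:2.14a} of $R^1\pi_*\mO_V([-W])$ with $\bigoplus_{j=1}^{d-1}L^{-j}$ is not defined via \v{C}ech representatives: it is the composite of \eqref{eq:1.7}, \eqref{eq:2.4} and Proposition \ref{p2.1}, whose last identity rests on Serre duality \eqref{eq:2.8} together with the Euler-sequence isomorphism \eqref{eq:2.6}. Matching the \v{C}ech basis $t^k$ with the summand $L^{-k}$ therefore requires evaluating the Serre pairing on explicit cocycles. Your $\C^{*}$-weight comparison (after specializing to $\alpha=0$) determines this matching only up to an unknown nonzero scalar on each weight space; it cannot show that the scalar is $1$ (even the sign depends on the convention for the connecting homomorphism and for the duality pairing). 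Consequently your argument only proves that $\delta^{-1}$ is upper triangular with some nonzero diagonal $(c_1,\dots,c_{d-1})$ and off-diagonal entries proportional to $\alpha_{j-i}$, i.e. the asserted matrix up to conjugation by an undetermined diagonal matrix. This is not cosmetic: the exact value $1$ on the diagonal is what yields $\nu_3=1$ in the proof of Proposition \ref{p2.5}, and hence enters Theorems \ref{t3.2} and \ref{t4.1}. The paper closes this gap with the explicit compatibility computation \eqref{eq:2.17}--\eqref{eq:2.21}, showing that the canonical Serre-duality element $a$ of \eqref{eq:2.20}, transported through the trivialization \eqref{eq:2.19}, pairs the \v{C}ech cocycles $\alpha_{l_1\cdots l_n}$ with the monomials $\beta_{l'_1\cdots l'_n}$ to exactly $\delta_{(l_1\cdots l_n),(l'_1\cdots l'_n)}$; you would need to supply this normalization (or an equivalent one) to complete your proof.
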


\begin{proof} Clearly, under the identification \eqref{eq:2.7}, 
$\jmath$ is the canonical embedding of   $\C$  into the factor    $\C$  
in $R^0 \pi_{W*} \mO _W $,
so the exact sequence \eqref{eq:2.14} is canonical split.

To prove \eqref{eq:2.15}, we use  \v{C}ech cohomology.
Before prove \eqref{eq:2.15}, we explain the compatibility 
of \eqref{eq:2.8}, \eqref{eq:2.9} and \v{C}ech 
 cohomology on $\C\PP ^n$. 

Let $(X_0,\cdots, X_n)$ be linear coordinates on $  \C ^{n+1}$, 
and let $\{x_i= X_i/X_0:  i=1, \cdots, n\}$ be the 
corresponding affine coordinates. 
Let $U_i= (X_i\neq 0) \subset     \C\PP ^n$.
Let $K= \Lambda ^n (T^*    \C\PP ^n)$ be the canonical line  bundle on 
$   \C \PP ^n$. By \cite[p409]{GriffithsHarris}, we have an exact sequence of
 holomorphic vector bundles on $    \C\PP ^n$
\begin{align}\label{eq:2.17}
0 \to   \C  \to \mO _{\C\PP^n} (1)^{n+1}
\to T    \C\PP ^n\to 0.
\end{align}
By \eqref{eq:2.17}, we have 
\begin{align}\label{eq:2.18}
K \stackrel{v}{\simeq} \mO _{\C\PP^n} (-(n+1)).
\end{align}
We trivialize $\mO _{\C\PP^n}(1)$ by 
$(1, 0,\cdots, 0)\in   \C ^{n+1, *}$
 on $U_0$. By \cite[p409]{GriffithsHarris}, on $U_0$, we have 
\begin{align}\label{eq:2.19}
v(dx_1 \wedge \cdots \wedge dx_n) = 1 
\in \mO _{\C\PP^n}(-(n+1)).
\end{align}
By \cite[Remark 3.7.1.1]{Hartshorne77}, there exists a  canonical element 
$a \in H^n(    \C\PP ^n, K)$ which defines the Serre duality $\mu$.
 On $\cap_{i=0}^n U_i$,
consider the  cocycle, we have  
\begin{align}\label{eq:2.20}
a = \frac{1}{x_1\cdots x_n} dx_1 \wedge \cdots \wedge dx_n.
\end{align}
By \cite[Theorem 3.5.1]{Hartshorne77}, using \v{C}ech  cohomology, 
on $\cap_{i=0}^n U_i$,
$H^n(    \C\PP ^n, \mO _{  \C\PP ^n}(-n-k-1))$ 
$(k\in   \N )$ is generated by  the \v{C}ech  cocycle 
$$ \Big\{ \alpha_{l_1\cdots l_n}= x_1^{-(l_1+1)} \cdots x_n^{-(l_n+1)}:
\sum_{i=1}^n l_i \leq k, l_i\in   \N \Big \}.$$
Also  $H^0(   \C \PP ^n, \mO _{  \C\PP ^n}(k))$
$(k\in   \N )$ is generated by 
$$\Big\{ \beta_{l_1\cdots l_n}= x_1^{l_1} \cdots  x_n^{l_n}: 
 \sum_{i=1}^n l_i \leq k, l_i\in   \N \Big\}$$ 
on $U_0$. By \eqref{eq:2.19}, \eqref{eq:2.20},
 we have the following commutative diagram 
\begin{align} \label{eq:2.21}	%\qquad
	\begin{split}
	\xymatrix{
H^n(\C\PP ^n, \mO _{\C\PP ^n}(-n-k-1))\ar[d]^{\mu} 
\ar[r]^{\text{\v{C}ech}}&
%\stackrel{\mbox{ \scriptsize \v{C}ech}}{\longrightarrow }&
\text{span} \Big\{\alpha_{l_1\cdots l_n}:
\sum_{i=1}^n l_i \leq k, l_i\in   \N  \Big\}
\ar[d]^{\mu_{1}} \\
 H^0(\C\PP ^n, \mO _{ \C \PP ^n}(n+k+1)\otimes K)^* 
\ar[d]^{\wr \wr v} & \\
 {H^0(\C\PP ^n,} \mO _{\C  \PP ^n}(k))^* 
 \ar[r]^{\text{\v{C}ech}}&\quad
 { \Big(\text{span} \Big\{\beta_{l_1\cdots l_n}:
 \sum_{i=1}^n l_i \leq k, l_i\in   \N  \Big\} \Big)^*}.
}    \end{split}  
\end{align}
Thus the map $\mu_1$ is  such that  
\begin{align}
\mu_1(\alpha_{l_1\cdots l_n}) (\beta_{l'_1\cdots l'_n}) 
= \delta_{(l_1\cdots l_n), (l'_1\cdots l'_n)}. \nonumber
\end{align}

Now we are ready to establish \eqref{eq:2.15}.
Let $(v,u)$ be the local homogeneous coordinates of $\PP(L\oplus 1)$. 
Let $U_1 = \{ (v,u)\in \PP(L\oplus 1): v \neq 0 \}$, 
$U_2 = \{ (v,u)\in \PP(L\oplus 1): u \neq 0 \}$ with affine 
coordinate $t$ as a function on $U_{2}$ with values in $\pi^{*}L$. 
We will identify $U_2$ with $L$, then 
$U_1 \cap U_2= L\setminus S$, $t^{-1}$
is a section of $\pi^* L^{-1}$ on $U_1 \cap U_2$. 
As explained in the proof of Proposition \ref{p2.2}, %By \eqref{eq:2.11}, 
on $U_2$, for $x\in S$, we have
\begin{align}\label{eq:2.22}
(R^0\pi_{W*} \mO _W)_x = \Big\{\sum_{i= 0}^{d-1} \gamma_i t^i :  \quad 
\gamma_i \in \mO _{S,x} (L^{-i})  \Big\}.
\end{align}

We recall that  from  \eqref{eq:1.7}, \eqref{eq:2.2}, on $V$, 
\begin{align}
[-W] = \pi^* L^{-d} \otimes \mO _V(-d). \nonumber
\end{align}
By  \eqref{eq:2.7}, \eqref{eq:2.21}, on $U_1 \cap U_2$,  for $x\in S$,  
we have 
\begin{align}\label{eq:2.23}
 (R^1\pi_* \mO _V ([-W]))_x 
 = \Big\{\sum_{j=1}^{d-1} \gamma_{d-j} t^{-j} : 
\quad \gamma_{d-j} \in \mO _{S,x} (L^{-d+j}) \Big\}.
\end{align}

On $U_1 \cap U_2$, we have 
\begin{align} 
\tau_{[W]} (\gamma_{d-j} t^{-j})= \gamma_{d-j} \Big(t ^{d-j}
+ \sum_{i=1}^{d} \alpha_i(x) t^{d-i-j} \Big). \nonumber
\end{align}
The function $\gamma_{d-j} (t^{d-j}
+ \sum_{i=1}^{d-j-1} \alpha_i(x) t^{d-i-j})$ is holomorphic 
on $U_2$, $\gamma_{d-j} (\sum_{i=d-j}^{d} \alpha_i(x) t^{d-i-j})$
is holomorphic on $U_1$. By the definition of $\delta_1$, we have 
\begin{align}\label{eq:2.24}
\delta_1 \Big (\gamma_{d-j} (t^{d-j}
+\sum_{i=1}^{d-j-1} \alpha_i(x) t^{d-i-j} )\Big )
= \gamma_{d-j} t^{-j}.
\end{align}

By  \eqref{eq:2.22},  \eqref{eq:2.23} and \eqref{eq:2.24},  
we have \eqref{eq:2.15} and \eqref{eq:2.16}.    \end{proof}

We  also have   an exact sequence of sheaves  over $V$
\begin{align} \label{eq:2.25}\begin{split}
0 \to \mO _V([-dS]) \stackrel{\tau_{[S]}^d}{\to}
 \mO _V \to \imath_* \mO _S\big(\bigoplus_{i=0}^{d-1}L^{-i}\big) \to 0.
\end{split}\end{align}
By \eqref{eq:2.7} and \eqref{eq:2.25}, we have the following  
 exact sequence of sheaves over $S$
\begin{align}\label{eq:2.26} \begin{split}
0\to R^0 \pi_* \mO _V \to 
\mO _S\big(\bigoplus_{i=0}^{d-1} L^{-i}\big) \stackrel{\delta'}{\to}
R^1\pi_* \mO _V ([-dS]) =\mO _S\big(\bigoplus_{i=1}^{d-1}L^{-i}\big)\to 0.
\end{split}\end{align}

\begin{prop} \label{p2.4} Under the identifications \eqref{eq:2.12},  
the exact sequence \eqref{eq:2.26} is naturally split, and  
${\delta'}|_{\bigoplus_{i=1}^{d-1} L^{-i}}= \Id$.
\end{prop}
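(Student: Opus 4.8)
The plan is to read Proposition~\ref{p2.4} as the degenerate case $\alpha_1=\cdots=\alpha_d=0$ of Proposition~\ref{p2.3}: here the divisor $W$ is replaced by $dS$, the section $\tau_{[W]}$ by $\tau_{[S]}^d$, and the short exact sequence \eqref{eq:2.13} by \eqref{eq:2.25}. Accordingly I would rerun the same \v{C}ech-cohomology argument that produced \eqref{eq:2.15}--\eqref{eq:2.16}, but now with all $\alpha_i$ set to zero, and check that the connecting map becomes diagonal. Throughout I work in the two charts $U_1=\{v\neq 0\}$ and $U_2=\{u\neq 0\}\simeq L$ of Proposition~\ref{p2.3}, with the affine fibre coordinate $t$ a section of $\pi^*L$ on $U_2$, so that $U_1\cap U_2=L\setminus S$ and $t^{-1}$ is a section of $\pi^*L^{-1}$ there.

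First I would pin down the splitting abstractly. Applying $R^\bullet\pi_*$ to \eqref{eq:2.25} and using $R^0\pi_*\mO_V=\C$, $R^0\pi_*\mO_V([-dS])=0$ (valid since $d\ge 2$) and $R^1\pi_*\mO_V=0$ from \eqref{eq:2.7} and \eqref{eq:2.12} gives exactly \eqref{eq:2.26}, with $\delta'$ the connecting homomorphism. Since the injection $R^0\pi_*\mO_V=\C\hookrightarrow\mO_S(\bigoplus_{i=0}^{d-1}L^{-i})$ is the inclusion of the $i=0$ summand $L^0=\C$ (the constants along the fibre), exactness forces $\delta'$ to annihilate that summand. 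Hence it suffices to prove that $\delta'$ restricts to the identity on the complementary subbundle $\bigoplus_{i=1}^{d-1}L^{-i}$: by a rank count this single statement gives both assertions at once, for then $\delta'|_{\bigoplus_{i=1}^{d-1}L^{-i}}$ is an isomorphism $\bigoplus_{i=1}^{d-1}L^{-i}\xrightarrow{\sim}R^1\pi_*\mO_V([-dS])$, which is a canonical section of \eqref{eq:2.26}.

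For the computation of $\delta'$ I would use the monomial descriptions \eqref{eq:2.22} and \eqref{eq:2.23}, whose module structures are unchanged in the present case: on $U_2$ an element of the middle term is $\sum_{i=0}^{d-1}\gamma_i t^i$ with $\gamma_i\in\mO_{S,x}(L^{-i})$, while on $U_1\cap U_2$ a class in $R^1\pi_*\mO_V([-dS])$ is $\sum_{i=1}^{d-1}\gamma_i t^{i-d}$. Fix $i$ with $1\le i\le d-1$ and a section $\gamma_i\in L^{-i}$. Multiplying the representative $\gamma_i t^{i-d}$ by $\tau_{[S]}^d$, which on $U_1\cap U_2$ is just $t^d$ since $\tau_{[S]}$ is the tautological section $t$, produces $\gamma_i t^{i}$, holomorphic on $U_2$ because $i\ge 1$. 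By the definition of the connecting homomorphism (exactly as in \eqref{eq:2.24} with $\alpha=0$) this means $\delta'(\gamma_i t^i)=\gamma_i t^{i-d}$, i.e. $\delta'$ sends the $i$-th summand $L^{-i}$ identically to the $i$-th summand of the target. This is precisely the vanishing of all off-diagonal $\alpha_{j-i}$ in \eqref{eq:2.16}, and it yields $\delta'|_{\bigoplus_{i=1}^{d-1}L^{-i}}=\Id$.

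The only point needing care is that the two identifications of the middle term agree canonically: the abstract one, coming from the filtration of $\mO_V/\tau_{[S]}^d\mO_V([-dS])$ by powers of the conormal bundle $[S]^{-1}|_S=L^{-1}$ (via \eqref{eq:1.7a}), and the concrete monomial basis $\{t^i\}$ used above. Because $\tau_{[S]}$ restricts to the tautological section on $L$ (the discussion around \eqref{eq:1.6}), the degree-$i$ graded piece is generated precisely by $t^i$ with coefficients in $L^{-i}$, so the two identifications coincide on the nose and no scalar correction enters. I do not expect a genuine obstacle beyond this bookkeeping of conormal powers: once the compatibility is recorded, the local computation is literally the identity, with the whole argument being the $\alpha=0$ specialization of the proof of Proposition~\ref{p2.3}.
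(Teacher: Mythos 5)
Your proposal is correct and follows essentially the same route as the paper: the paper's own proof also reuses the \v{C}ech setup and monomial identifications \eqref{eq:2.22}--\eqref{eq:2.23} from Proposition~\ref{p2.3} and reduces everything to the single computation $\tau_{[S]}^{d}(\gamma_{d-j}t^{-j})=\gamma_{d-j}t^{d-j}$ of \eqref{eq:2.27}, which is exactly your ``$\alpha=0$ specialization'' of \eqref{eq:2.24}. The extra remarks you add (exactness forcing $\delta'$ to kill the $i=0$ summand, and the compatibility of the abstract filtration with the monomial basis $\{t^i\}$) are correct bookkeeping that the paper leaves implicit.
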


 \begin{proof} We use the notation in the proof of Proposition \ref{p2.3}.
On $U_1 \cap U_2$, we have 
\begin{align}\label{eq:2.27}
\tau_{[S]}^{d} (\gamma_{d-j} t^{-j})= \gamma_{d-j} t^{d-j}.
\end{align}
In \eqref{eq:2.25} as in (\ref{eq:2.23}), we have identified 
$\imath_* \mO _S(L^{-i})$ to 
$\{\gamma_i t^i: \gamma_i\in \mO _S(L^{-i})\}$ on $U_2$.

By the definition of $\delta'$ and (\ref{eq:2.27}), we have 
  Proposition \ref{p2.4}.\end{proof}

As in Definition \ref{d1.1}, we define the  complex lines
\begin{align} \label{eq:2.28}\begin{split}
\lambda'_d (\pi^* \xi)&= \lambda(\pi^* \xi) \otimes 
\lambda ^{-1}([-dS]\otimes \pi^* \xi),\\
\lambda _W(\xi)&= \lambda (R^0 \pi_* \mO _V\otimes \xi)
\otimes \lambda (R^1\pi_*([-W]) \otimes \xi).
\end{split}\end{align}
By \cite{KnudsenMumford}, \eqref{eq:2.13}, \eqref{eq:2.25},  
we have the canonical isomorphisms:
\begin{align}\label{eq:2.29} \begin{split}
\lambda(\xi') \simeq \lambda'_d  (\pi^* \xi),\quad 
 \lambda(\bigoplus_{i=0}^{d-1} L^{-i} \otimes \xi)
 \simeq \lambda'_d (\pi^* \xi).
\end{split}\end{align}
Let $\tau_d, \sigma_1$ be the canonical sections of 
 $\lambda^{-1}(\bigoplus_{i=0}^{d-1} L^{-i} \otimes \xi) \otimes 
\lambda '_d (\pi^* \xi)$ via (\ref{eq:2.25}),
$\lambda^{-1}(\xi')\otimes \lambda'_d  (\pi^* \xi)$
 via (\ref{eq:2.13}). Recall that $\sigma$ is the canonical section of 
 $\lambda(\xi')\otimes \lambda^{-1}  (R^{\bullet}\pi_{W*} \xi')$.

\begin{prop} \label{p2.5}  Under the identifications \eqref{eq:2.12}, 
	we have 
\begin{align} \label{eq:2.30}
\sigma = \sigma_1^{-1} \otimes  \tau_d.
\end{align}
\end{prop}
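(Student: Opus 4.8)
The plan is to interpret $\sigma$, $\sigma_1$ and $\tau_d$ as Knudsen--Mumford isomorphisms of determinant lines and to reduce \eqref{eq:2.30} to the comparison of the two exact sequences \eqref{eq:2.14} and \eqref{eq:2.26} on $S$. Since $\jmath$ is finite we have $\lambda(\xi')=\det R\Gamma(W,\xi')=\det R\Gamma(V,\jmath_*\xi')$, and $\sigma_1$ is the isomorphism attached to \eqref{eq:2.13} tensored with $\pi^*\xi$, while $\tau_d$ is the one attached to \eqref{eq:2.25} tensored with $\pi^*\xi$; both land in the common target $\lambda'_d(\pi^*\xi)$ of \eqref{eq:2.29}. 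By the identification $[W]=[dS]$ from \eqref{eq:2.4}, both sequences share the same sub-object $\mO_V([-dS])\otimes\pi^*\xi$ inside $\pi^*\xi$. On the other hand, because $\pi_W$ is finite we have $R^\bullet\pi_{W*}\xi'=R^0\pi_{W*}\xi'$, so $\sigma$ is exactly the isomorphism $\det R\Gamma(W,\xi')\simeq\det R\Gamma(S,R^0\pi_{W*}\xi')$ coming from $R\Gamma(V,\jmath_*\xi')=R\Gamma(S,R\pi_*\jmath_*\xi')$. Writing $\rho$ for the isomorphism induced by \eqref{eq:2.10}, the identity \eqref{eq:2.30} is then equivalent to the commutativity $\sigma_1\circ\sigma=\tau_d\circ\rho$.

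First I would invoke the functoriality of the Knudsen--Mumford determinant under the direct image $R\pi_*$. Applying $R\pi_*$ to \eqref{eq:2.13} and to \eqref{eq:2.25} (each tensored with $\pi^*\xi$) and using the direct image computations \eqref{eq:2.7}, \eqref{eq:2.12}, \eqref{eq:2.14a} produces exactly the short exact sequences \eqref{eq:2.14} and \eqref{eq:2.26} of sheaves on $S$. By functoriality the composite $\sigma_1\circ\sigma$ is then the Knudsen--Mumford isomorphism of \eqref{eq:2.14}, and $\tau_d$ is the one of \eqref{eq:2.26}. Hence it suffices to prove that, once the middle terms $R^0\pi_{W*}\xi'$ and $\bigoplus_{i=0}^{d-1}L^{-i}\otimes\xi$ are identified by \eqref{eq:2.10}, the Knudsen--Mumford isomorphisms of \eqref{eq:2.14} and \eqref{eq:2.26} coincide.

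The two sequences share the same sub-object $\xi$: the inclusion is the embedding of the constant (degree $0$) part in both cases, as noted in the proof of Proposition \ref{p2.3}, and these match under \eqref{eq:2.10}. They also have the same quotient $\bigoplus_{j=1}^{d-1}L^{-j}\otimes\xi$, so their two surjections onto it differ by a unique automorphism $g$ of $\bigoplus_{j=1}^{d-1}L^{-j}\otimes\xi$. By Proposition \ref{p2.3} the surjection of \eqref{eq:2.14} corresponds under \eqref{eq:2.10} to the map $\delta$ with $\delta^{-1}=(a_{ij})$ upper triangular with $1$'s on the diagonal, whereas by Proposition \ref{p2.4} the surjection of \eqref{eq:2.26} is the identity; thus $g=\delta$ is unipotent and $\det g=1$. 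A unipotent endomorphism of a coherent sheaf induces unipotent endomorphisms on all its cohomology groups, hence acts as the identity on $\lambda(\bigoplus_{j=1}^{d-1}L^{-j}\otimes\xi)$. Therefore the two Knudsen--Mumford isomorphisms agree, which gives $\sigma_1\circ\sigma=\tau_d\circ\rho$ and hence \eqref{eq:2.30}.

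The main obstacle is the first step: making precise the compatibility of the canonical Knudsen--Mumford sections with the direct image $R\pi_*$, so that the determinant isomorphisms computed from \eqref{eq:2.13} and \eqref{eq:2.25} on $V$ descend to those of \eqref{eq:2.14} and \eqref{eq:2.26} on $S$; this also requires careful bookkeeping of the identifications \eqref{eq:1.7}, \eqref{eq:2.4} and \eqref{eq:2.12} to ensure the two sequences really do have the same sub-object. Once this is in place, the entire content is the triangularity recorded in Propositions \ref{p2.3} and \ref{p2.4}, which forces the comparison automorphism to have determinant $1$.
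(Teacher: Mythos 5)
Your proposal is correct and follows essentially the same route as the paper: your ``functoriality under $R\pi_*$'' step is exactly what the paper carries out via the degenerate Leray spectral sequence \eqref{eq:2.33} and the commutative diagram of long exact sequences \eqref{eq:2.34a} (yielding $\sigma\otimes\tau=\sigma_1^{-1}\otimes\nu_1$ and $\tau=\nu_2\otimes\tau_d^{-1}$), and your unipotence argument for the comparison automorphism is precisely the paper's recurrence showing $\nu_3=1$. The step you flag as the main obstacle is indeed where the paper invests its effort, but your two ingredients --- direct-image compatibility of the Knudsen--Mumford sections plus the triangularity from Propositions \ref{p2.3} and \ref{p2.4} --- coincide with the published argument.
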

 \begin{proof} Let $\nu_{3}$ be the canonical section of 
	 $$\lambda (R^{1}\pi_{*} [-W] \otimes \xi)
\otimes 	 \lambda (R^{1}\pi_{*} [-dS] \otimes \xi)^{-1}$$
induced by $\delta$ in Proposition \ref{p2.3}. 
Let	 $p_{r}: 
	 \bigoplus_{i=r}^{d-1} L^{-i} \to\bigoplus_{i=r+1}^{d-1} L^{-i}$
	 be the canonical projection. 
	 Let $\delta_{r}: \bigoplus_{i=r}^{d-1} L^{-i}
	 \to  \bigoplus_{i=r}^{d-1} L^{-i}$ be the map defined by
	 the matrix $(a_{ij})$ as in (\ref{eq:2.16}), then we have
\begin{align} \label{eq:2.30a}	%\qquad
	\begin{split}
	\xymatrix{
0\ar[r]  &   L^{-r}\ar[r]  \ar[d]^{\Id}&
\bigoplus_{j=r}^{d-1} L^{-j} \ar[r]^{p_{r}} \ar[d]^{\delta_{r}} &
\bigoplus_{j=r+1}^{d-1} L^{-j}\ar[r]\ar[d]^{\delta_{r+1}}&  0\\
0\ar[r] &    L^{-r}\ar[r] &
\bigoplus_{j=r}^{d-1} L^{-j}\ar[r]^{p_{r}}  &
\bigoplus_{j=r+1}^{d-1} L^{-j}\ar[r]&  0.
}    \end{split}  
\end{align}
By considering the long exact sequence from 	 (\ref{eq:2.30a}), 
\begin{align} \label{eq:2.31a}	\begin{split}
	\xymatrix{
0\ar[r]  &  H^{0}(S,  L^{-r}\otimes \xi)\ar[r]  \ar[d]^{\Id}&
H^{0}(S,\bigoplus_{j=r}^{d-1}  L^{-j}\otimes \xi)
\ar[r]^{p_{r}} \ar[d]^{\delta_{r}} &
H^{0}(S,\bigoplus_{j=r+1}^{d-1}  L^{-j}\otimes \xi)
\ar[r]\ar[d]^{\delta_{r+1}}&  \cdots\\
0\ar[r] &   H^{0}(S,  L^{-r}\otimes \xi)\ar[r] &
H^{0}(S,\bigoplus_{j=r}^{d-1}  L^{-j}\otimes \xi)\ar[r]^{p_{r}}  &
H^{0}(S,\bigoplus_{j=r+1}^{d-1}  L^{-j}\otimes \xi)\ar[r]&  \cdots,
}    \end{split}  
\end{align}
	as $\delta_{d-1}: L^{-d+1}\to L^{-d+1}$ is the identity map, by 
	recurrence, we know the canonical section of
	$$ \lambda (\bigoplus_{j=r}^{d-1}  L^{-j}\otimes \xi)
	\otimes \lambda^{-1} (\bigoplus_{j=r}^{d-1}  L^{-j}\otimes \xi)$$
	 induced by $\delta_{r}$ is $1$  for all $r\geq 1$.
We conclude in particular that
\begin{align}\label{eq:2.32a}
\nu_{3}=1.
\end{align}

% 	 By Propositions \ref{p2.3},  \ref{p2.4},  \eqref{eq:2.14}, 
% 	 \eqref{eq:2.26}, we have the following commutative diagram 
% \begin{align} \label{eq:2.31}\begin{split}
%  0\to  R^0 \pi_* \mO _V \stackrel{j'}{\to}  R^0 \pi_{W*} \mO _W 
% & \stackrel{\delta_1}{\to}
%  R^1\pi_* \mO _V ([-W]) \to  0\\
% 0\to \stackrel{\uparrow}{R^0 \pi_* \mO _V} \to 
% \stackrel{\uparrow}{\mO _S(\oplus_{i=0}^{d-1} L^{-i})}
%  &\stackrel{\delta_2}{\to}
% \stackrel{\uparrow \delta}{R^1\pi_* \mO _V ([-dS])} \to 0.
% \end{split}\end{align}
As in (\ref{eq:1.3}), we define the complex line $\lambda(\xi)$ for 
$\xi$ on $S$. By Proposition \ref{p2.3}, (\ref{eq:2.14}) and 
(\ref{eq:2.27}), we have the following commutative diagram
\begin{align} \label{eq:2.33a}	\begin{split}
	\xymatrix{
0\ar[r]  & R^{0}\pi_{*}\mO_{V}\ar[r]  &
 R^{0}\pi_{*}\jmath_{*}\mO_{W}
\ar[r]^{\delta_{1}}  &
R^{1}\pi_{*}\mO_{V}([-W])
\ar[r]&  0\\
0\ar[r] &    R^{0}\pi_{*}\mO_{V}\ar[r]\ar[u]^{\Id} &
\mO_{S}(\bigoplus_{j=0}^{d-1}  L^{-j})\ar[r]\ar[u]^{\Id}  &
R^{1}\pi_{*}\mO_{V}([-dS])\ar[r] \ar[u]^{\delta}&  0,
}    \end{split}  
\end{align}

Let $\nu_{1}, \nu_{2}$ be the canonical sections of 
$$\lambda^{-1} (R^0 \pi_{W*} \xi') \otimes \lambda _W(\xi), \quad
\lambda^{-1} (\bigoplus_{j=0}^{d-1}  L^{-j}\otimes \xi)
\otimes \lambda(\xi)\otimes \lambda( R^1\pi_* \mO _V ([-dS])\otimes \xi)
$$
induced by  \eqref{eq:2.14} and \eqref{eq:2.26}. 
By \eqref{eq:2.15}, \eqref{eq:2.32a} and  \eqref{eq:2.33a},  we have 
\begin{align} \label{eq:2.32}
\nu_{1} = \nu_{2}\otimes \nu_{3}= \nu_{2} .
\end{align}

In our situation, the Leray spectral sequences \cite[\S 3.7]{Grot57}
associated to $\pi: V\to S$ and the considering vector bundles $\eta$
($\eta = [-dS]\otimes \pi^* \xi$, etc),  are degenerate, 
as $R^{0}\pi_{*} \eta=0$ or $R^{1}\pi_{*} \eta=0$, so 
\begin{align}\label{eq:2.33}
H^k(V, \eta) \simeq  \bigoplus_{i+j=k} H^i (S, R^j \pi_* \eta).
\end{align}
Then by \eqref{eq:2.13}, \eqref{eq:2.14}, \eqref{eq:2.26}
and \eqref{eq:2.33}, we have the following commutative diagram
of long exact sequences
\begin{align} \label{eq:2.34a}	\begin{split}
	\xymatrix{
0\ar[r]  &  H^{0}(V,  \pi^{*} \xi)\ar[r]  \ar[d]&
H^{0}(W,\pi_{W}^{*}\xi)
\ar[r] \ar[d] &
H^{1}(V, [-W]\otimes \pi^{*} \xi)
\ar[r]\ar[d]&  \cdots\\	
	0\ar[r]  &  H^{0}(S,   \xi)\ar[r]  &
H^{0}(S,R^{0}\pi_{W*}\mO_{W}\otimes \xi)
\ar[r]^{\delta_{1}} &
H^{0}(S, R^{1}\pi_{*}[-W]\otimes \xi)
\ar[r]&  \cdots\\
	0\ar[r]  &  H^{0}(S,   \xi)\ar[r] \ar[u]^{\Id} &
H^{0}(S,\bigoplus_{j=0}^{d-1}  L^{-j}\otimes \xi)
\ar[r]^{\delta'} \ar[u] &
H^{0}(S, R^{1}\pi_{*}[-dS]\otimes \xi)
\ar[r]\ar[u]^{\delta}&  \cdots\\
0\ar[r]  &  H^{0}(V,  \pi^{*} \xi)\ar[r]  \ar[u]&
H^{0}(S,\bigoplus_{j=0}^{d-1}  L^{-j}\otimes \xi)
\ar[r] \ar[u]^{\Id} &
H^{1}(V,[-dS]\otimes \pi^{*} \xi)
\ar[r]\ar[u]&  \cdots .
}    \end{split}  
\end{align}

Let $\tau$ be  the  canonical section of 
${\lambda'_d} ^{-1}(\pi^*\xi)\otimes \lambda _W(\xi)$ 
induced by \eqref{eq:2.33}. 
The $\sigma$ (resp. $\tau$) is obtained from the second vertical map 
(resp. the rest part of the vertical maps) of the 
first two lines  of \eqref{eq:2.34a}. 
The $\sigma_{1}$ (resp. $\nu_{1}$) is obtained from the first (resp. 
second) line of \eqref{eq:2.34a}, 
and $\nu_{2}$, $\tau_{d}$ is obtained from the third,
fourth line of \eqref{eq:2.34a}.  Finally $\tau$ is also obtained 
from the first and third  vertical maps of 
the last two lines of \eqref{eq:2.34a}. 

By \cite[(1.3)]{BGS88a}, 
\cite[Proposition 1]{KnudsenMumford}, 
\eqref{eq:2.13}, \eqref{eq:2.25}  and 
\eqref{eq:2.34a},  we have 
\begin{align}\label{eq:2.34}
\begin{split}
&\sigma\otimes  \tau = \sigma_1^{-1} \otimes  \nu_{1},\\
& \tau = \nu_{2} \otimes \tau_d ^{-1}.
\end{split}\end{align}

By \eqref{eq:2.32} and  \eqref{eq:2.34}, we have \eqref{eq:2.30}.
\end{proof}

For $0\leq i \leq d-1$,  we have an  exact sequence of sheaves  over $V$
\begin{align}\label{eq:2.35} \begin{split}
0 \to \mO _V([-(i+1)S]) \stackrel{\tau_{[S]}}{\to}
 \mO _V([-iS]) \to \imath_* \mO _S(L^{-i}) \to 0.
\end{split}\end{align}
By \eqref{eq:2.35}, we have the   exact sequence of sheaves  over $V$
\begin{align} \label{eq:2.36}
 \begin{split}
0 \to \bigoplus_{i=0}^{d-1} [-(i+1)S] \otimes
\pi^* \xi \stackrel{\tau_{[S]}}{\to}
 \bigoplus_{i=0}^{d-1} [-iS] \otimes \pi^* \xi 
\to \imath_* \mO _S(\bigoplus_{i=0}^{d-1} L^{-i} \otimes \xi) \to 0.
\end{split} 
\end{align}
Let $\lambda _d( \pi^* \xi)$, $\lambda _V([-kS] \otimes \pi^* \xi)$ 
$(k\geq 1)$
be the complexe lines
\begin{align}\label{eq:2.37} \begin{split}
&\lambda _d( \pi^* \xi) 
=\lambda(\oplus_{i=0}^{d-1} [-iS] \otimes \pi^* \xi)
\otimes \lambda ^{-1}(\oplus_{i=1}^{d} [-iS] \otimes \pi^* \xi),\\
&\lambda _V([-kS] \otimes \pi^* \xi)= \lambda([-(k-1)S] \otimes \pi^* \xi)
\otimes \lambda ^{-1}([-kS] \otimes \pi^* \xi).
\end{split}\end{align}
By  \cite{KnudsenMumford}, \eqref{eq:2.35}, \eqref{eq:2.36}, 
we have the canonical isomorphisms:
\begin{align} \label{eq:2.38}\begin{split}
 \lambda(\oplus_{i=0}^{d-1} L^{-i} \otimes \xi) 
& \simeq \lambda_d (\pi^* \xi),
\quad \lambda (L^{-k+1} \otimes \xi) 
\simeq \lambda _V([-kS] \otimes \pi^* \xi),\\
\lambda_d ( \pi^* \xi) &  = \lambda'_d ( \pi^* \xi).
\end{split}\end{align}
Let $\varphi_k, \rho_d$ be the canonical sections of 
$$\lambda^{-1} (L^{-k+1} \otimes \xi) 
\otimes  \lambda _V([-kS] \otimes \pi^* \xi), \quad 
 \lambda^{-1}(\oplus_{i=0}^{d-1} L^{-i} \otimes \xi) 
\otimes \lambda_d(\pi^* \xi).$$
%We identify $\lambda_d ( \pi^* \xi)$ to  $\lambda'_d ( \pi^* \xi)$
% by this  canonical isomorphism. 
Then
\begin{align}\label{eq:2.39}
 \rho_d = \bigotimes_{i=1}^{d} \varphi_i
\end{align}

\begin{prop} \label{p2.6} Under the identification \eqref{eq:2.12}, we have
\begin{align}\label{eq:2.40}
\tau_d =   \rho_d. 
\end{align}
\end{prop}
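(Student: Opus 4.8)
The plan is to deduce $\tau_d = \rho_d$ from the transitivity (associativity) of the canonical isomorphism of determinant lines under composition of short exact sequences, the key point being that the single map $\tau_{[S]}^d$ of \eqref{eq:2.25} factors as the $d$-fold composition
\begin{align*}
\mO_V([-dS]) \xrightarrow{\tau_{[S]}} \mO_V([-(d-1)S]) \xrightarrow{\tau_{[S]}} \cdots \xrightarrow{\tau_{[S]}} \mO_V([-S]) \xrightarrow{\tau_{[S]}} \mO_V,
\end{align*}
whose $k$-th arrow is exactly the inclusion \eqref{eq:2.35} with $i=k-1$, with cokernel $\imath_*\mO_S(L^{-(k-1)})$. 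Tensoring with $\pi^*\xi$ and using the monomial description from the proof of Proposition \ref{p2.2} (the basis $1,t,\dots,t^{d-1}$, where the span of $t^i$ is a copy of $L^{-i}$), I would check that the successive quotients $t^{k-1}\mO_V/t^{k}\mO_V \simeq L^{-(k-1)}$ of this filtration of $\mO_V/\tau_{[S]}^d\mO_V([-dS])$ are precisely the summands of the direct-sum cokernel $\bigoplus_{i=0}^{d-1}L^{-i}$ appearing in \eqref{eq:2.25}; in particular the filtration is canonically split by \eqref{eq:2.10} and compatible with the decomposition defining $\rho_d$.

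First I would record that both sections live on the same line: by the telescoping cancellation of the overlapping factors $[-S],\dots,[-(d-1)S]$ one has $\lambda_d(\pi^*\xi)=\lambda'_d(\pi^*\xi)$, as already noted in \eqref{eq:2.38}, so $\tau_d$ and $\rho_d$ both lie in $\lambda^{-1}(\bigoplus_{i=0}^{d-1}L^{-i}\otimes\xi)\otimes\lambda'_d(\pi^*\xi)$. Next I would invoke the multiplicativity of the canonical section along a filtration: by \cite[(1.3)]{BGS88a} and \cite[Proposition 1]{KnudsenMumford}, the canonical isomorphism attached to $0\to A\to B\to C\to 0$ is compatible with a refinement $A\hookrightarrow A'\hookrightarrow B$ through the induced sequence $0\to A'/A\to C\to B/A'\to 0$. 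Iterating this $d$ times along the factorization above expresses the canonical section $\tau_d$ of \eqref{eq:2.25} as the tensor product of the canonical sections $\varphi_k$ of the individual steps \eqref{eq:2.35}. Since $\rho_d=\bigotimes_{i=1}^d\varphi_i$ by \eqref{eq:2.39}, this yields $\tau_d=\rho_d$.

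The main obstacle will be the bookkeeping that the transitivity isomorphism matches the direct-sum splitting of the cokernel in \eqref{eq:2.25} exactly, with no residual extension or sign contribution. This is where the explicit monomial basis is decisive: because the graded pieces $L^{-(k-1)}$ of the filtration coincide with the summands furnished by \eqref{eq:2.10}, the filtration is canonically split and the iterated transitivity isomorphism collapses to the plain product of the $\varphi_k$ with no leftover automorphism. As a fallback I would reprove the identity by the same \v{C}ech-cocycle computation as in Proposition \ref{p2.3}, tracking $\varphi_k$ and $\tau_d$ directly on $U_1\cap U_2$ via \eqref{eq:2.27}; but the transitivity argument is cleaner and avoids repeating that calculation.
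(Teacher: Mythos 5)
Your proposal is correct and follows essentially the same route as the paper: the paper proves \eqref{eq:2.40} by exhibiting, for each $k$, the one-step refinement diagram \eqref{eq:2.41} relating \eqref{eq:2.25} for $k$ and $k+1$ to the single step \eqref{eq:2.35}, deducing $\tau_k^{-1}\otimes\tau_{k+1}=\varphi_{k+1}$ from the Knudsen--Mumford compatibility and telescoping via \eqref{eq:2.39} --- which is exactly your iterated transitivity along the filtration $\mO_V([-dS])\subset\cdots\subset\mO_V$. Your attention to matching the graded pieces of the filtration with the direct-sum cokernel via the monomial basis is the same point the paper handles through the identification in Proposition \ref{p2.2} and the first row of \eqref{eq:2.41}.
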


\begin{proof}
For $k\geq 1$, consider  the complex of $\mO _V$-sheaves on $V$
\begin{align}\label{eq:2.41}  \begin{array}{ll}
\hspace*{16mm}0 \hspace*{20mm}0 \hspace*{26mm}0 & 
\hspace*{4mm}0\\
0\to  \stackrel{\uparrow}{\imath_* \mO _S (L^{-k})} 
\stackrel{\mbox{\scriptsize Id}}{\to}
\imath_* \stackrel{\uparrow}{\mO _S} (\oplus_{i=0}^{k} L^{-i})
\to \stackrel{\uparrow}{\imath_*} \mO _S (\oplus_{i=0}^{k-1} L^{-i}) \to&
\hspace*{4mm}\stackrel{\uparrow}{ 0} \to 0\\
0\to \hspace*{8mm}\stackrel{\uparrow}{ 0} \to
\hspace*{13mm} \stackrel{\uparrow}{\mO _V} 
\stackrel{\mbox{\scriptsize Id}}{\to }
\hspace*{20mm}\stackrel{\uparrow}{\mO _V} \to & 
\hspace*{4mm}\stackrel{\uparrow}{0} \to 0\\
0\to \hspace*{8mm} \stackrel{\uparrow}{0} \to 
\hspace*{6mm}\stackrel{\tau_{[S]}^{k+1}\uparrow}{\mO _V[-(}k+1)S])
\stackrel{\tau_{[S]}}{ \to}
 \stackrel{\tau_{[S]}^k\uparrow}{\mO _V }([-k S]) \to 
& \stackrel{\uparrow}{\imath_* \mO _S} (L^{-k}) \to 0\\
\hspace*{15mm}\stackrel{\uparrow}{0}\hspace*{20mm} 
\stackrel{\uparrow}{0}
\hspace*{27mm} \stackrel{\uparrow}{0} &
\hspace*{4mm}\stackrel{\uparrow}{0}
\end{array}\end{align}
In \eqref{eq:2.41}, the rows  are exact sequences of sheaves. 
The second and third columns correspond to \eqref{eq:2.25}.

By \cite{KnudsenMumford}, \eqref{eq:2.29}, 
\eqref{eq:2.38} and \eqref{eq:2.41}, we have 
\begin{align}\label{eq:2.42} 
\tau^{-1}_{k} \otimes \tau_{k+1} = \varphi_{k+1}.
\end{align}
By \eqref{eq:2.25}, \eqref{eq:2.35}, we have also
\begin{align}\label{eq:2.43} 
\varphi_1= \tau_1.
\end{align}
By \eqref{eq:2.39}, \eqref{eq:2.42}, \eqref{eq:2.43}, 
we have \eqref{eq:2.40}.
\end{proof}

%%%%%%%%%%%%%%%%%%%%%%%%%%%%%%%%%
\section{Comparison formula for the Quillen metrics}\label{s3}

\begin{defn}\label{d3.1} Let $P^V$ be the vector space 
of smooth forms on a complex manifold $V$, which are sums of forms
of type $(p,p)$.
 Let $P^{V,0}$ be the vector space of the forms $\alpha \in P^V$ 
such that there exist smooth
 forms $\beta, \gamma$ on $V$ for which $\alpha= \partial \beta + 
\overline{\partial} \gamma$. 
\end{defn}

If $A$ is (q,q) matrix, set
\begin{align}\label{eq:3.1}
\td (A) =  \det\Big(\frac{A}{1- e^{-A}}\Big),\qquad
 \ch (A) =\tr[\exp(A)], \quad c_{1}(A) = \tr[A].
\end{align}
The genera associated to Td and ch  are called the Todd genus and the 
Chern character.

Let $P$ be an ad-invariant power series on square matrices. If $(F, h^F)$
is a holomorphic Hermitian vector bundle on $V$, let $ \nabla^F$ be 
the corresponding holomorphic Hermitian connection, 
and let $R^F$ be its curvature. Set 
\begin{align}\label{eq:3.1a}
P(F, h^F) = P\Big(\frac{-R^F}{2i\pi}\Big).
\end{align}
By the Chern-Weil theory, then $P(F,h^F)$ is a closed form which lies in $P^V$, and its cohomology
 class $P(F)$ does not depend  on $h^F$.

From now on, we use the assumption and notation 
of Section \ref{s1} and $S$ is a compact K\"ahler manifold.
Then $V$ is K\"ahler. 
Recall that we identify $S$ with $\{(x, (0,1))\in V: x\in S\}\subset V$.

Let $N_{S/V}$, $N_{W/V}$ be the normal bundles to $S,W$ in $V$.

Let $h^{TV}$ be a K\"ahler metric on $TV$. 
Let $h^{TW}, h^{TS}, h^{TY}$ be 
 the metrics on $TW, TS, TY$ induced by $h^{TV}$.
Let $h^{N_{S/V}}, h^{N_{W/V}}$ be the metrics on 
$N_{S/V}, N_{W/V}$,
as the orthogonal complements of $TS, TW$, induced by $h^{TV}$. 

By \eqref{eq:1.6}, \eqref{eq:2.4}, the maps
\begin{align}\label{eq:3.2a}\begin{split}
	N_{S/V}\to  [S]|_{S}, &\qquad N_{W/V}\to  [W]|_{W},\\
	y\to \partial_{y} \tau_{[S]},   & \qquad 
	\qquad y\to \partial_{y} \tau_{[W]}
\end{split}
\end{align}
define the canonical isomorphisms of 
%\begin{align}\label{eq:3.3a}
	$N_{S/V}\simeq  [S]|_{S}, N_{W/V}\simeq [W]|_{W}$.
%\end{align}
Let $h^{[S]}$ (resp. $h^{[W]}$) be a Hermitian metric on $[S]$ 
(resp. $[W]$) on $V$ such that the isomorphisms
(\ref{eq:3.2a}) are isometries.

 Let $h^{[-iS]}$ be the metrics on $[-iS]$ induced
 by $h^{[S]}$ and let $h^L$ be the metric on $L$ induced by 
 $h^{[S]}$ via (\ref{eq:1.7a}).
 Let $h^{[-W]}$ be the dual metric on $[-W]$ induced by $h^{[W]}$.

\comment{
Let $h^{[S]}$ be a Hermitian metric on $[S]$ on $V$. Let $h^{[-iS]}$, 
$h^{[-W]} = h^{[-d S]}$ be the metrics on $[-iS], [-W]= [-dS]$ induced
 by $h^{[S]}$. Let $h^L$ be the metric on $L$ induced by $h^{[S]}_{|S}$.

By \cite[p146]{GriffithsHarris}, we have also
\begin{align}\label{eq:3.2}
\begin{split}
	N_{S/V}= [S]|_{S},\quad N_{W/V}= [W]|_{W}.
	\begin{split}
\end{align}
Let $h^{N_{S/V}}, h^{N_{W/V}}$ be the metrics on $N_{S/V}, N_{W/V}$
induced by $h^{[S]}, h^{[W]}$. 
}

Let $h^{\xi}$ be a metric on $\xi$. Let $h^{\xi'}$ be the metric 
on $\xi'$ induced  by $h^{\xi}$. Let $h^{R\pi_{W*} \xi'}$ be the metric on 
$R^0\pi_{W*} \xi'$ induced by $h^L, h^\xi$ 
under identification \eqref{eq:2.12}.

Let $||\quad ||_{\lambda(\xi')}, ||\quad ||_{\lambda (R\pi_{W*}\xi')}$
be the Quillen metric \cite{Q85a}, \cite{BGS88c} on $\lambda(\xi')$, 
$\lambda (R\pi_{W*}\xi')$. Under identification \eqref{eq:2.12}, 
all the complex 
lines considered in Section \ref{s2} provide with the Quillen metrics.

Let $\zeta (s)$ be the Riemann zeta function. Let $R(x)$ be the 
Gillet-Soul\'e power series \cite{GS91},
\begin{align}\label{eq:3.3}
R(x)= \sum_{\stackrel{n\geq 1}{n\, \mbox{\scriptsize odd}}} 
\Big (\frac{2\zeta'(-n) }{\zeta(-n)} + \sum_{j=1}^n \frac{1}{ j}\Big )
 \zeta(-n) 
\frac{x^n}{ n!}.
\end{align}
We identify $R$ to the corresponding additive genus.

Let $P^V_W$ be the set of currents on $V$ which are sums of
 currents of type (p,p), whose wave front set is included in 
$N^*_{W/V,   \R }$. Let $P^{V,0}_W$ be the set of current 
$\alpha \in P^V_W$ such that there exist currents 
$\beta,\gamma$ on $V$, 
whose wave front set is included in $N^*_{W/V,   \R }$, such that 
$\alpha =  \partial \beta + \overline{\partial} \gamma$.

Let $(\xi_1, v), (\xi_2, v)$ be the complexes on $V$
\begin{align} \label{eq:3.4}\begin{split}
(\xi_1, v):&\quad 0 \to [-W]\otimes \pi^* \xi  
\stackrel{\tau_{[W]}}{\to} \pi^* \xi \to 0,\\
(\xi_2, v) :& \quad 0 \to \bigoplus_{i=0}^{d-1} [-(i+1)S] \otimes
\pi^* \xi \stackrel{\tau_{[S]}}{\to}
 \bigoplus_{i=0}^{d-1} [-iS] \otimes \pi^* \xi \to 0.
\end{split}\end{align}
Let $h^{\xi_1}$ (resp. $h^{\xi_2}$) be the metrics on $\xi_1$ (resp. 
on $\xi_2$) induced by  $h^{[W]}$ (resp. $h^{[S]}$)
and $h^{\xi}$.

%Let $\delta_{\{W\}}, \delta_{\{S\}}$ be the currents on $V$ defined by 
%the integration on $W,S$.

Let $T(\xi_1, h^{\xi_1}) \in P^V_W,  T(\xi_2, h^{\xi_2})\in P^V_S$ 
be the Bott-Chern currents constructed in \cite[Theorem 2.5]{BGS90b}.  
The forms 
$T(\xi_i, h^{\xi_i})$  verify the following equations
\begin{align}\label{eq:3.5}\begin{split}
\frac{ \overline{\partial} \partial}{2i\pi}T(\xi_1, h^{\xi_1})
&= \td ^{-1}(N_{W/V}, h^{N_{W/V}}) \ch (\xi',h^{\xi'}) \delta_{\{W\}} 
-  \ch (\xi_1,h^{\xi_1})\\
&= \td ^{-1}([W], h^{[W]}) \pi^{*} \ch (\xi,h^{\xi}) 
\Big(\delta_{\{W\}} - c_{1}([W], h^{[W]})\Big) ,\\
\frac{ \overline{\partial} \partial}{2i\pi}T(\xi_2, h^{\xi_2})
&= \td ^{-1}(N_{S/V}, h^{N_{S/V}}) 
\sum_{j=0}^{d-1}\ch([-jS], h^{[-jS]})\ch (\xi,h^{\xi}) \delta_{\{S\}} 
\\
&\qquad-  \ch (\xi_2,h^{\xi_2})\\
&=\Big(\frac{1-e^{-dx}}{x}\Big) ([S], h^{[S]}) 
\pi^{*} \ch (\xi,h^{\xi}) 
\Big(\delta_{\{S\}} - c_{1}([S], h^{[S]})\Big) .
\end{split}\end{align}

Over $W$, we have the exact sequence of holomorphic Hermitian vector 
bundles
\begin{align}\label{eq:3.6}
0\to TW \to TV \to N_{W/V} \to 0.
\end{align}
Let $\widetilde{\td }(TW, TV|_{W}, h^{TV}) \in P^W/P^{W,0}$ 
be the Bott-Chern class constructed in  \cite[Theorem 1.29]{BGS88a}, 
such that 
\begin{align}\label{eq:3.7}
\frac{ \overline{\partial} \partial}{2i\pi}
\widetilde{\td }(TW, TV|_{W}, h^{TV})
= \td (TV, h^{TV}) 
- \td (TW, h^{TW}) \td (N_{W/V}, h^{N_{W/V}}).
\end{align}

Over $S$, we have the exact sequence of holomorphic Hermitian 
vector bundles
\begin{align}\label{eq:3.8}
0\to TS \to TV \to N_{S/V} \to 0.
\end{align}
Let $\widetilde{\td }(TS, TV|_{S}, h^{TV}) \in P^S/P^{S,0}$
be the corresponding  Bott-Chern class of \cite{BGS88a}. 
It verifies the following equation 
\begin{align}\label{eq:3.9}
\frac{ \overline{\partial} \partial}{2i\pi}
\widetilde{\td }(TS, TV|_{S}, h^{TV}) 
=  \td (TV, h^{TV}) 
- \td (TS, h^{TS}) \td (N_{S/V}, h^{N_{S/V}}).
\end{align}

\comment{
On $V$, we have also the exact sequence of holomorphic 
Hermitian vector bundles
\begin{align}\label{eq:3.10}
0\to TY \to TV \to \pi^* TS \to 0.
\end{align}
Let $\widetilde{\td }(TV, TS, h^{TV}, h^{TS})\in P^V/P^{V,0}$ 
be the corresponding  Bott-Chern class such that
\begin{align}\label{eq:3.11}
\frac{ \overline{\partial} \partial}{2i\pi}
\widetilde{\td }(TV, TS, h^{TV}, h^{TS})
=  \td (TV, h^{TV}) 
- \pi^* \Big (\td (TS, h^{TS}) \Big ) \td (TY, h^{TY}).
\end{align}

On $S$, the map  $TY \to N_{S/V}$ induced by $TV\to N_{S/V}$
is an isomorphism.
Let $\widetilde{\td }(TY|_{S}, h^{TY}, h^{N_{S/V}})$ $\in P^S/P^{S,0}$
be the Bott-Chern class such that
\begin{align}\label{eq:3.12}
	\frac{ \overline{\partial} \partial}{2i\pi}
\widetilde{\td }(TY|_{S}, h^{TY}, h^{N_{S/V}})
= \td (TY|_{S}, h^{TY}) - \td (N_{S/V},  h^{N_{S/V}}).
\end{align}
}

	We establish first the following result about Bott-Chern classes.
\begin{lemma} \label{d3.3} For  a holomorphic line  bundle $F$ on a 
compact complex manifold $Z$, and $h^{F}, h^{F}_{1}$ two metrics on 
$F$ with dual metrics $h^{F^{-1}}, h^{F^{-1}}_{1}$ on $F^{-1}$,
we have in $P^{Z}/P^{Z,0}$,
\begin{align}\label{eq:3.4a}
- \wi{\ch} (F^{-1},  h^{F^{-1}}, h^{F^{-1}}_{1}) =
\wi{\td^{-1}} (F, h^{F}, h^{F}_{1}) c_{1}(F, h^{F})
+ \td^{-1} (F, h^{F}_{1}) \wi{c_{1}} (F, h^{F}, h^{F}_{1}),
\end{align}
with $\wi{\ch}, \wi{\td^{-1}}, \wi{c_{1}}$ the Bott-Chern classes 
such that
\begin{align}\label{eq:3.4b}\begin{split}
&\frac{ \overline{\partial} \partial}{2i\pi}
\wi{\ch} (F^{-1},  h^{F^{-1}}, h^{F^{-1}}_{1})
= \ch (F^{-1}, h^{F^{-1}}_{1}) - \ch (F^{-1}, h^{F^{-1}}),\\
&\frac{ \overline{\partial} \partial}{2i\pi}
\wi{\td^{-1}} (F, h^{F}, h^{F}_{1})
= \td^{-1}(F, h^{F}_{1}) - \td^{-1}(F, h^{F}),\\
&\frac{ \overline{\partial} \partial}{2i\pi}
\wi{c_{1}} (F, h^{F}, h^{F}_{1})
= c_{1} (F,  h^{F}_{1})- c_{1} (F,  h^{F}) .
\end{split}\end{align}

	\end{lemma}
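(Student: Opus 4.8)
The plan is to deduce \eqref{eq:3.4a} from the multiplicativity (Leibniz rule) of Bott--Chern secondary classes, thereby reducing it to a formal identity between the underlying genera. Throughout I write $x=c_{1}(F,h^{F})$ for the first Chern form and regard $\td^{-1}$, $c_{1}$, $\ch$ as one--variable power series (the line bundle case, where all Chern--Weil forms are scalar $(1,1)$--forms and hence commute). The starting observation is that the right--hand side of \eqref{eq:3.4a} is precisely the Leibniz expansion of a single Bott--Chern class,
\begin{align*}
\wi{(\td^{-1}\!\cdot c_{1})}(F,h^{F},h^{F}_{1})
=\wi{\td^{-1}}(F,h^{F},h^{F}_{1})\,c_{1}(F,h^{F})
+\td^{-1}(F,h^{F}_{1})\,\wi{c_{1}}(F,h^{F},h^{F}_{1}),
\end{align*}
where $\td^{-1}\!\cdot c_{1}$ is the genus obtained by multiplying the two power series.

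First I would establish this product rule. Applying $\tfrac{\overline{\partial}\partial}{2i\pi}$ to the right--hand side and using \eqref{eq:3.4b} together with the fact that the Chern--Weil forms $c_{1}(F,h^{F})$ and $\td^{-1}(F,h^{F}_{1})$ are closed (so that $\overline{\partial}\partial$ passes through the wedge factors), a short computation gives
\begin{align*}
\frac{\overline{\partial}\partial}{2i\pi}\Big(\wi{\td^{-1}}\,c_{1}(F,h^{F})+\td^{-1}(F,h^{F}_{1})\,\wi{c_{1}}\Big)
=(\td^{-1}\!\cdot c_{1})(F,h^{F}_{1})-(\td^{-1}\!\cdot c_{1})(F,h^{F}),
\end{align*}
which is exactly the defining transgression equation for $\wi{(\td^{-1}\!\cdot c_{1})}$. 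The equality in $P^{Z}/P^{Z,0}$ then follows from the canonicity of the Bott--Chern class in $P^{Z}/P^{Z,0}$, which I would invoke from \cite[Theorem 1.29]{BGS88a}; the same computation also yields the additivity $\wi{(\phi+\psi)}=\wi{\phi}+\wi{\psi}$ and the normalization $\wi{\phi}=0$ for a constant genus $\phi$, both of which I will use below.

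It then remains to identify the genus. Since $\td(x)=x/(1-e^{-x})$, we have $\td^{-1}(x)\cdot x=1-e^{-x}$; moreover $c_{1}(F)$ is the genus $x$, and $\ch(F^{-1})$ corresponds to the genus $e^{-x}$, because $c_{1}(F^{-1},h^{F^{-1}})=-c_{1}(F,h^{F})$ identifies the construction for $F^{-1}$ with the dual metrics with the genus $e^{-x}$ for $F$ equipped with $h^{F},h^{F}_{1}$. Hence, by additivity and the vanishing of the Bott--Chern class of the constant genus $1$,
\begin{align*}
\wi{(\td^{-1}\!\cdot c_{1})}(F,h^{F},h^{F}_{1})
=\wi{1}(F,h^{F},h^{F}_{1})-\wi{e^{-x}}(F,h^{F},h^{F}_{1})
=-\,\wi{\ch}(F^{-1},h^{F^{-1}},h^{F^{-1}}_{1}),
\end{align*}
which combined with the product rule gives \eqref{eq:3.4a}.

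The hard part is the product rule, and within it the correct placement of the two metrics: the form $c_{1}(F,h^{F})$ carries $h^{F}$ while $\td^{-1}(F,h^{F}_{1})$ carries $h^{F}_{1}$, and this asymmetry is forced by the Leibniz computation above. Pinning down the resulting class in $P^{Z}/P^{Z,0}$---rather than merely up to a $\overline{\partial}\partial$--closed term---is exactly where the canonicity of Bott--Chern classes is essential. An alternative, entirely self--contained route in the line bundle case would be to use the explicit double--transgression integral along the linear path $h^{F}_{u}=(h^{F})^{1-u}(h^{F}_{1})^{u}$, for which $c_{1}(F,h^{F}_{u})$ depends affinely on $u$; then all forms in sight commute and the integrals can be evaluated directly, avoiding any appeal to the general theory at the cost of a brief calculation.
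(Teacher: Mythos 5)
Your proposal is correct and follows essentially the same route as the paper: both rest on the Leibniz-type decomposition of the secondary class of the product genus $\td^{-1}\cdot c_{1}$, with the asymmetric placement of $h^{F}$ on the $c_{1}$ factor and $h^{F}_{1}$ on the $\td^{-1}$ factor, followed by the identity $\td^{-1}(x)\,x=1-e^{-x}$ which converts $\wi{(\td^{-1}\cdot c_{1})}$ into $-\wi{\ch}(F^{-1},h^{F^{-1}},h^{F^{-1}}_{1})$. The only difference is in how the product rule is justified: the paper gets it directly from the path-independence of the transgression integral (\cite[Remark 1.28]{BGS88a}), treating $\td^{-1}(F,h^{F})\,c_{1}(\eta,h^{\eta})$ as a characteristic form in two bundles and concatenating a path that varies the two metrics one after the other (your ``alternative self-contained route''), whereas your main argument verifies the $\overline{\partial}\partial$ equation and appeals to uniqueness of Bott--Chern classes --- which is legitimate, but note that the $\overline{\partial}\partial$ equation alone does not pin down a class in $P^{Z}/P^{Z,0}$, so you must also record that your right-hand side is functorial under pullback and vanishes when $h^{F}_{1}=h^{F}$ before invoking \cite[Theorem 1.29]{BGS88a}.
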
 
\begin{proof} Consider two holomorphic line bundles $F, \eta$ 
with metrics $h^{F}, h^{\eta}$ and the characteristic form 
	\begin{align}\label{eq:3.5a}
\phi(h^{F}, h^{\eta}) = \td^{-1}(F, h^{F})\,  c_{1}(\eta, h^{\eta}).
\end{align}
Then by \cite[Remark 1.28]{BGS88a}, given another pairs of metrics
$h^{F}_{1}, h^{\eta}_{1}$ on $F, \eta$, the associated Bott-Chern
form $\wi{\phi}$ is given by : for any smooth path $h^{F}_{t},
h^{\eta}_{t}$ for $t\in [0,1]$, from $h^{F}, h^{\eta}$ to 
$h^{F}_{1}, h^{\eta}_{1}$ respectively, the form
\begin{multline}\label{eq:3.6a}
\wi{\phi} = \int_{0}^{1}\Big\{\Big(\frac{1-e^{-x}}{x}\Big)' (F,  h^{F}_{t}) 
 (h^{F}_{t})^{-1} \frac{\partial h^{F}_{t}}{\partial t}
 c_{1} (\eta, h^{\eta}_{t})\\
 + \Big(\frac{1-e^{-x}}{x}\Big) (F, 
 h^{F}_{t}) (h^{\eta}_{t})^{-1} \frac{\partial h^{\eta}_{t}}{\partial t}
\Big\} dt \in P^{Z}/P^{Z,0}
\end{multline}
does not depend on the choice of the path 
$h^{F}_{t}, h^{\eta}_{t}$ and 
\begin{align}\label{eq:3.7a}
\frac{ \overline{\partial} \partial}{2i\pi}\wi{\phi} 
=  \phi(h^{F}_{1}, h^{\eta}_{1}) - \phi(h^{F}, h^{\eta}).
\end{align}

In particular, if we choose a path such that $h^{\eta}_{t}=h^{\eta}$
for $t\in [0, \frac{1}{2}]$ and $h^{F}_{t}=h^{F}_{1}$
for $t\in [ \frac{1}{2}, 1]$, then we get
\begin{align}\label{eq:3.8a}\begin{split}
& \wi{\phi} =\wi{\td^{-1}} (F, h^{F}, h^{F}_{1}) c_{1}(\eta, h^{\eta})
+ \td^{-1} (F, h^{F}_{1}) \wi{c_{1}} (\eta, h^{\eta}, h^{\eta}_{1}),\\
&\wi{c_{1}} (\eta, h^{\eta}, h^{\eta}_{1})
= \log \frac{h^{\eta}_{1}}{h^{\eta}}.
\end{split}\end{align}
If we take $\eta=F$ and $h^{F}_{t}= h^{\eta}_{t}$, the we get
\begin{align}\label{eq:3.9a}
\phi(h^{F}_{t}, h^{F}_{t}) = 1- \ch(F^{-1}, h^{F^{-1}}_{t}),
\end{align}
thus in this case
\begin{align}\label{eq:3.10a}
 \wi{\phi} = - \wi{\ch} (F^{-1},  h^{F^{-1}}, h^{F^{-1}}_{1}) .
\end{align}
From  \eqref{eq:3.8a} and \eqref{eq:3.10a}, 
we get \eqref{eq:3.5a}.
\end{proof}

We define 
\begin{multline}\label{eq:3.12a}
 \mT ( h^{[S]}, h^{[W]}) =  \td^{-1}([W], h^{[W]}) \log 
 \|\tau_{[W]}\|^{2}_{h^{[W]}}\\
 - \td^{-1}([dS], h^{[dS]}) \log 
 \|\tau_{[S]}^{d}\|^{2}_{h^{[dS]}}
 - \wi{\ch} ([-dS], h^{[-dS]}, h^{[-W]}).
\end{multline}
\begin{lemma} \label{d3.4} In $P^{V}_{W\cup S}/P^{V,0}_{W\cup S}$,
	$ \mT ( h^{[S]}, h^{[W]})$ does not depend on the choice of 
	$h^{[S]}, h^{[W]}$, thus we denote it as $\mT_{S,W}$,
	and we have 
	\begin{align}\label{eq:3.13a}
\frac{ \overline{\partial} \partial}{2i\pi}\mT_{S,W}
= \td^{-1}(N_{W/V}, h^{N_{W/V}}) \delta_{\{W\}}
- \Big (\frac{1-e^{-dx}}{x}\Big) (N_{S/V}, h^{N_{S/V}}) \delta_{\{S\}}.
\end{align}
\end{lemma}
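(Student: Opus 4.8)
The plan is to establish \eqref{eq:3.13a} by applying $\frac{\overline{\partial}\partial}{2i\pi}$ to the three terms of \eqref{eq:3.12a} one at a time, and then to read off the metric--independence from the observation that the right--hand side of \eqref{eq:3.13a} involves only the normal metrics $h^{N_{W/V}},h^{N_{S/V}}$, which are fixed by $h^{TV}$. The computation of \eqref{eq:3.13a} itself splits cleanly into a singular ($\delta$) part and a smooth part; the point of the third term $-\wi{\ch}([-dS],h^{[-dS]},h^{[-W]})$ is precisely to kill the smooth part.

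First I would treat the two Green--type terms. Since $\td^{-1}([W],h^{[W]})$ and $\td^{-1}([dS],h^{[dS]})$ are closed forms, $\frac{\overline{\partial}\partial}{2i\pi}$ passes through them and acts only on the logarithms. By \eqref{eq:1.6a}, \eqref{eq:2.4} (so that $\mbox{div}(\tau_{[W]})=W$ and $\mbox{div}(\tau_{[S]}^d)=dS$) together with the Poincar\'e--Lelong formula, one has $\frac{\overline{\partial}\partial}{2i\pi}\log\|\tau_{[W]}\|^2_{h^{[W]}}=\delta_{\{W\}}-c_1([W],h^{[W]})$ and $\frac{\overline{\partial}\partial}{2i\pi}\log\|\tau_{[S]}^d\|^2_{h^{[dS]}}=d\,\delta_{\{S\}}-c_1([dS],h^{[dS]})$. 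Multiplying by the Todd factors and using that \eqref{eq:3.2a} are isometries, so that $\td^{-1}([W],h^{[W]})|_{W}=\td^{-1}(N_{W/V},h^{N_{W/V}})$ and $c_1([dS],h^{[dS]})|_{S}=d\,c_1(N_{S/V},h^{N_{S/V}})$, the two $\delta$--contributions become exactly $\td^{-1}(N_{W/V},h^{N_{W/V}})\delta_{\{W\}}$ and $-\big(\frac{1-e^{-dx}}{x}\big)(N_{S/V},h^{N_{S/V}})\delta_{\{S\}}$, which are the two terms of \eqref{eq:3.13a}.

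It then remains to check that the smooth remainders cancel, and this is where the third term enters. By its defining property \eqref{eq:3.4b}, $\frac{\overline{\partial}\partial}{2i\pi}\wi{\ch}([-dS],h^{[-dS]},h^{[-W]})=\ch([-W],h^{[-W]})-\ch([-dS],h^{[-dS]})$. Using, for a line bundle $F$, the elementary identity $\td^{-1}(F)\,c_1(F)=1-\ch(F^{-1})$ (namely $\frac{1-e^{-x}}{x}\cdot x=1-e^{-x}$), the smooth remainders from the first two terms are $-(1-\ch([-W],h^{[-W]}))$ and $+(1-\ch([-dS],h^{[-dS]}))$, and these cancel the $\wi{\ch}$ contribution termwise. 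This proves \eqref{eq:3.13a}.

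For the independence of $h^{[S]},h^{[W]}$ I would argue as follows. Since the right--hand side of \eqref{eq:3.13a} depends only on $h^{N_{W/V}},h^{N_{S/V}}$, the difference of $\mT$ for two choices of $h^{[S]},h^{[W]}$ is $\frac{\overline{\partial}\partial}{2i\pi}$--closed; moreover its singular part vanishes, because changing the metric alters each logarithm only by a smooth function $\log(h^{[W]}_1/h^{[W]})$, so the difference is a smooth form. To identify this smooth closed form with an element of $P^{V,0}_{W\cup S}$, I would vary the metrics along a path and integrate the anomaly: the variation of each term is a Bott--Chern transgression, and Lemma \ref{d3.3} (the identity \eqref{eq:3.4a}) is exactly what guarantees that the transgressions of the Todd factor, of $\wi{c_1}$, and of $\wi{\ch}$ recombine into a single $\partial\beta+\overline{\partial}\gamma$, the metric--dependent contribution $\log(h^{[W]}/h^{[dS]})$ of the first two terms being absorbed by $-\wi{\ch}([-dS],h^{[-dS]},h^{[-W]})$, leaving the metric--independent $\log|f(\alpha)|^2$ in degree zero. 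I expect this final step, rather than \eqref{eq:3.13a}, to be the main obstacle: organizing the metric variation so that all pieces assemble into a genuine coboundary $\partial\beta+\overline{\partial}\gamma$ with $\beta,\gamma$ smooth (hence with the required wave front set), which is precisely the reason Lemma \ref{d3.3} was established beforehand.
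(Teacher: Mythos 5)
Your derivation of \eqref{eq:3.13a} is correct and coincides with the paper's (very terse) first step: Poincar\'e--Lelong on the two Green-type terms, the isometries \eqref{eq:3.2a} to convert $\td^{-1}([W],h^{[W]})\delta_{\{W\}}$ and $d\cdot\td^{-1}([dS],h^{[dS]})\delta_{\{S\}}$ into the normal-bundle expressions, and the identity $\td^{-1}(F)\,c_{1}(F)=1-\ch(F^{-1})$ to see that $\frac{\overline\partial\partial}{2i\pi}\wi{\ch}([-dS],h^{[-dS]},h^{[-W]})$ kills the smooth remainders. No issue there.

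The metric-independence half, which you yourself flag as the main obstacle, contains a genuine gap. First, the difference $\mT(h^{[S]},h^{[W]})-\mT(h^{[S]},h^{[W]}_{1})$ is \emph{not} a smooth form: besides the smooth term $-\td^{-1}([W],h^{[W]}_{1})\log\frac{h^{[W]}_{1}}{h^{[W]}}$ that you track, it contains
$\bigl(\td^{-1}([W],h^{[W]})-\td^{-1}([W],h^{[W]}_{1})\bigr)\log\|\tau_{[W]}\|^{2}_{h^{[W]}}$,
coming from varying the Todd prefactor against the unchanged singular logarithm. In $P^{V}_{W\cup S}/P^{V,0}_{W\cup S}$ this term equals $\wi{\td^{-1}}([W],h^{[W]}_{1},h^{[W]})\bigl(\delta_{\{W\}}-c_{1}([W],h^{[W]})\bigr)$; after Lemma \ref{d3.3} absorbs all the smooth pieces (this is indeed the role of \eqref{eq:3.4a}, as you guessed), what survives is $\wi{\td^{-1}}([W],h^{[W]}_{1},h^{[W]})\,\delta_{\{W\}}$, which is \emph{not} an element of $P^{V,0}_{W\cup S}$ for arbitrary pairs of metrics. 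It vanishes only because both $h^{[W]}$ and $h^{[W]}_{1}$ are required to make \eqref{eq:3.2a} an isometry, hence agree (with $h^{N_{W/V}}$) on $W$, so that $\wi{\td^{-1}}([W],h^{[W]}_{1},h^{[W]})|_{W}=0$. Your proposal never invokes this normalization in the independence argument, and without it the claimed independence is false (already its degree-$(1,1)$ component against $\delta_{\{W\}}$ is nonzero). The same remark applies to the variation of $h^{[S]}$. Incidentally, once this is seen, no path integration of the anomaly is needed: the paper's argument is a one-line finite-difference computation ending in $\wi{\td^{-1}}([W],h^{[W]}_{1},h^{[W]})\delta_{\{W\}}=0$.
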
 

\begin{proof} By Poincar\'e-Lelong formula and (\ref{eq:3.2a}), we 
get first  (\ref{eq:3.13a}).

Let $h^{[W]}_{1}$ be another metric on $[W]$ such that  
(\ref{eq:3.2a}) is an isometry.  Then by Lemma \ref{d3.3}, we have in 
$P^{V}_{W\cup S}/P^{V,0}_{W\cup S}$,
\begin{multline}\label{eq:3.14a}
 \mT ( h^{[S]}, h^{[W]}) -  \mT ( h^{[S]}, h^{[W]}_{1}) \\
 = \Big(\td^{-1}([W], h^{[W]}) - \td^{-1}([W], h^{[W]}_{1})\Big)
 \log  \|\tau_{[W]}\|^{2}_{h^{[W]}} \\
 - \td^{-1}([W], h^{[W]}_{1}) \log \frac{h^{[W]}_{1}}{h^{[W]}}
 + \wi{\ch} ([-dS], h^{[-W]}, h^{[-W]}_{1})\\
 = \wi{\td^{-1}} ([W], h^{[W]}_{1}, h^{[W]})\delta_{\{W\}}=0,
\end{multline}
as $h^{[W]}= h^{[W]}_{1}= h^{N_{W/V}}$ on $W$.

By the same argument, we know also
$\mT ( h^{[S]}, h^{[W]})$ does not depend on $h^{[S]}$.
\end{proof}
	
\begin{thm} \label{t3.2} The following identity holds
\begin{multline}\label{eq:3.13}
\log (||\sigma|| ^2_{\lambda(\xi') \otimes 
\lambda^{-1}(R\pi_{W*}\xi')})
=\int_{V} \td (TV, h^{TV}) \mT_{S,W}\\
%  \int_S \td (TS, h^{TS}) \ch (\xi,h^{\xi})
% \int_Y \td (TY, h^{TY}) \td ^{-1}([W], h^{[W]}) \log |f(\alpha)|^2\\
\qquad - \int_W \td ^{-1} (N_{W/V}, h^{N_{W/V}})
 \widetilde {\td }(TW, TV|_{W}, h^{TV}) 
 %  - \widetilde {\td }(TV,  TS, h^{TV}, h^{TS})\Big ]
 \ch  (\xi', h^{\xi'})\\
 +\int_S \Big(\frac{1-e^{-dx}}{ x}\Big) (L, h^{[S]}) 
%\td (TS, h^{TS}) \widetilde{\td }(TY, h^{TY}, h^{N_{S/V}})
 \widetilde{\td }(TS, TV|_{S}, h^{TV}) \ch (\xi,h^{\xi}) \\
 +\int_S \td (TS)R(TS) \ch (R^{\bullet} \pi_{W*} \mO _W)
\ch (\xi)
-\int_{W}\td (TW) R(TW) \ch (\xi').
\end{multline}
\end{thm}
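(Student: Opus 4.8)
The plan is to reduce $\log\|\sigma\|^{2}$ to two applications of the Bismut--Lebeau embedding formula \cite[Theorem 0.1]{BL91} and then to recombine the outputs by means of Section~\ref{s2}. By Propositions~\ref{p2.5} and~\ref{p2.6} we have $\sigma=\sigma_{1}^{-1}\otimes\tau_{d}$ and $\tau_{d}=\rho_{d}$. The intermediate line $\lambda'_{d}(\pi^{*}\xi)=\lambda_{d}(\pi^{*}\xi)$ carries a single Quillen metric coming from $V$, so its contributions cancel when $\sigma_{1}^{-1}$ is tensored with $\tau_{d}$, and the multiplicativity of the Quillen metric gives
\begin{align*}
\log\|\sigma\|^{2}_{\lambda(\xi')\otimes\lambda^{-1}(R\pi_{W*}\xi')}
=-\log\|\sigma_{1}\|^{2}+\log\|\rho_{d}\|^{2}.
\end{align*}
Under \eqref{eq:2.12} one has $\bigoplus_{i=0}^{d-1}L^{-i}\otimes\xi\simeq R^{0}\pi_{W*}\xi'$ as Hermitian bundles, so the Quillen metrics on $\lambda(\bigoplus_{i}L^{-i}\otimes\xi)$ and on $\lambda(R\pi_{W*}\xi')$ agree; it remains to evaluate the two Quillen norms on the right.

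For $\sigma_{1}$ I apply \cite{BL91} to the immersion $\jmath\colon W\hookrightarrow V$ and the two-term resolution $(\xi_{1},v)$ of \eqref{eq:3.4}, whose cohomology is $\jmath_{*}\xi'$; the metric hypothesis of \cite{BL91} holds because $h^{[W]}$ was chosen so that \eqref{eq:3.2a} is an isometry $N_{W/V}\simeq[W]|_{W}$. For $\rho_{d}$ I apply \cite{BL91} to $\imath\colon S\hookrightarrow V$ and the resolution $(\xi_{2},v)$ of \eqref{eq:3.4}—equivalently, to each factor of $\rho_{d}=\bigotimes_{i=1}^{d}\varphi_{i}$ in \eqref{eq:2.39}—which is a direct sum of $d$ codimension-one resolutions of the sheaves $\imath_{*}(L^{-i}\otimes\xi)$, and here the choice of $h^{[S]}$ makes \eqref{eq:3.2a} an isometry $N_{S/V}\simeq[S]|_{S}$. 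In the normalization of \cite[Theorem 0.1]{BL91}, each computation produces a current term $-\int_{V}\td(TV)T(\xi_{i},h^{\xi_{i}})$, a normal-bundle term equal to $\int_{W}\td^{-1}(N_{W/V})\widetilde{\td}(TW,TV|_{W})\ch(\xi')$ for $\sigma_{1}$ and to $\int_{S}\td^{-1}(N_{S/V})\widetilde{\td}(TS,TV|_{S})\sum_{j=0}^{d-1}\ch([-jS])\ch(\xi)$ for $\rho_{d}$, an $R$-genus term over $W$ resp.\ $S$, and an $R$-genus term over $V$ carrying $\ch(\jmath_{*}\xi')$ resp.\ $\ch(\imath_{*}(\bigoplus_{i}L^{-i}\otimes\xi))$.

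Substituting into the displayed identity (the $W$-contributions flip sign, the $S$-contributions do not) reproduces the five integrals of \eqref{eq:3.13}. The two normal-bundle terms give the second and third integrals; for the third one uses, via $N_{S/V}\simeq L$, the genus identity $\td^{-1}(N_{S/V})\sum_{j=0}^{d-1}\ch([-jS])=\big(\frac{1-e^{-dx}}{x}\big)(L,h^{[S]})$ already recorded in \eqref{eq:3.5}. The two $R$-genus terms over $W$ and $S$ give the last two integrals, with $\ch(\bigoplus_{i}L^{-i})=\ch(R^{\bullet}\pi_{W*}\mO_{W})$ by \eqref{eq:2.10}. The two $R$-genus terms over $V$ cancel, since by \eqref{eq:2.2} the divisors $W$ and $dS$ are linearly equivalent and hence $\ch(\jmath_{*}\xi')=\ch(\imath_{*}(\bigoplus_{i}L^{-i}\otimes\xi))$ in $H^{\bullet}(V)$. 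Finally the two current terms combine into $\int_{V}\td(TV)\big(T(\xi_{1},h^{\xi_{1}})-T(\xi_{2},h^{\xi_{2}})\big)$, which I must identify with the first integral of \eqref{eq:3.13}.

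The main obstacle is precisely this last identification: one has to prove $T(\xi_{1},h^{\xi_{1}})-T(\xi_{2},h^{\xi_{2}})=\pi^{*}\ch(\xi,h^{\xi})\,\mT_{S,W}$ in $P^{V}_{W\cup S}/P^{V,0}_{W\cup S}$, not merely modulo the image of $\partial$ and $\overline{\partial}$. By \eqref{eq:3.5} and \eqref{eq:3.13a} both sides have the same image under $\frac{\overline{\partial}\partial}{2i\pi}$, so they differ by a closed class, and controlling this ambiguity is exactly what Lemma~\ref{d3.4}—together with Lemma~\ref{d3.3} and the Poincaré--Lelong formula—provides. The correction term $\wi{\ch}([-dS],h^{[-dS]},h^{[-W]})$ in the definition \eqref{eq:3.12a} of $\mT_{S,W}$ absorbs the discrepancy between the metrics $h^{[-W]}$ and $h^{[-dS]}$ on the cohomologically equal line bundle $[-W]=[-dS]$, while the explicit Bott--Chern formula of \cite[Theorem 2.5]{BGS90b} for $T(\xi_{1},h^{\xi_{1}})$ and $T(\xi_{2},h^{\xi_{2}})$ pins the two currents down exactly.
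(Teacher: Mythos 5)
Your overall strategy coincides with the paper's: factor $\sigma$ as $\sigma_1^{-1}\otimes\tau_d$ with $\tau_d=\rho_d$ (Propositions \ref{p2.5}, \ref{p2.6}), evaluate the two Quillen norms by applying Bismut--Lebeau to the resolutions $(\xi_1,v)$ and $(\xi_2,v)$, and recombine. But there is a genuine gap at the very first step. You assert that the intermediate line $\lambda'_d(\pi^*\xi)=\lambda_d(\pi^*\xi)$ ``carries a single Quillen metric coming from $V$,'' so that $\log\|\sigma\|^2=-\log\|\sigma_1\|^2+\log\|\rho_d\|^2$. This is false: the Quillen metric on $\lambda'_d(\pi^*\xi)$ is built from $h^{[-W]}$, normalized so that $N_{W/V}\simeq [W]|_W$ is an isometry, while the Quillen metric on $\lambda_d(\pi^*\xi)$ is built from $h^{[-dS]}$, induced by $h^{[S]}$ and normalized at $S$. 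These are two genuinely different Hermitian metrics on the single holomorphic line bundle $[-W]=[-dS]$, hence two different Quillen metrics on the same determinant line. The paper therefore inserts the anomaly formula \cite[Theorem 1.23]{BGS88c}, which contributes the extra factor $\|\cdot\|^2_{\lambda'_d(\pi^*\xi)}/\|\cdot\|^2_{\lambda_d(\pi^*\xi)}$ in \eqref{eq:3.17}, computed in \eqref{eq:3.20a} as $-\int_V\td(TV,h^{TV})\,\wi{\ch}([-dS],h^{[-dS]},h^{[-W]})$. Your decomposition is missing this term.

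The omission propagates into your final step. The identity you say you must prove, $T(\xi_1,h^{\xi_1})-T(\xi_2,h^{\xi_2})=\pi^{*}\ch(\xi,h^{\xi})\,\mT_{S,W}$, is not true. The currents are pinned down not by Lemma \ref{d3.4} but by \cite[Remark 3.5 and Theorem 3.17]{BGS90a}, which give $T(\xi_1,h^{\xi_1})=\pi^{*}\ch(\xi,h^{\xi})\,\td^{-1}([W],h^{[W]})\log\|\tau_{[W]}\|^2_{h^{[W]}}$ and $T(\xi_2,h^{\xi_2})=\pi^{*}\ch(\xi,h^{\xi})\,\td^{-1}([dS],h^{[dS]})\log\|\tau_{[S]}^{d}\|^2_{h^{[dS]}}$; comparing with the definition \eqref{eq:3.12a}, the difference equals $\pi^{*}\ch(\xi,h^{\xi})$ times $\mT_{S,W}+\wi{\ch}([-dS],h^{[-dS]},h^{[-W]})$, and the leftover $\wi{\ch}$ term is exactly what the anomaly contribution cancels. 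Your closing remark that this $\wi{\ch}$ term ``absorbs the discrepancy between the metrics'' points at the right phenomenon but locates it incorrectly: Lemma \ref{d3.4} only establishes that $\mT_{S,W}$ is independent of the choice of $h^{[S]},h^{[W]}$ and satisfies \eqref{eq:3.13a}; it does not turn $T(\xi_1)-T(\xi_2)$ into $\mT_{S,W}$. To close the argument you need both the explicit Bott--Chern evaluation of \cite{BGS90a} and the anomaly formula comparing the two Quillen metrics on the middle determinant line.
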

\begin{proof}
	Let $\|\quad \|_{\lambda'_d(\pi^* \xi)}^{2}$ be the Quillen 
	metric on $\lambda'_d(\pi^* \xi)$ (\ref{eq:2.28}) induced by 
	$h^{[W]}$, $h^{\xi}$ and $h^{TV}$. 
	Let $\|\quad \|_{\lambda_d(\pi^* \xi)}^{2}$ be the Quillen 
	metric on $\lambda_d(\pi^* \xi)\simeq\lambda'_d(\pi^* \xi)$
(\ref{eq:2.37}) induced by $h^{[S]}$, $h^{\xi}$ and $h^{TV}$. 
	By the anomaly formula \cite[Theorem 1.23]{BGS88c}, we have 
	\begin{align}\label{eq:3.20a}
\log \frac{\|\quad \|_{\lambda'_d(\pi^* \xi)}^{2}}
{\|\quad \|_{\lambda_d(\pi^* \xi)}^{2}}
= - \int_{V} \td (TV, h^{TV}) 
\wi{\ch}( [-dS], h^{[-dS]}, h^{[-W]}).
\end{align}

By using %the Theorem of Bismut-Lebeau 
\cite[Theorem 6.1]{BL91},
 \eqref{eq:2.13}, \eqref{eq:2.36} and \eqref{eq:3.4}, we have 
\begin{multline}\label{eq:3.14}
\log (||\sigma_1||^2_{\lambda'_d(\pi^* \xi) \otimes \lambda^{-1}(\xi')})=
- \int _V \td (TV, h^{TV}) T(\xi_1, h^{\xi_1})\\
+ \int_W \td ^{-1}(N_{W/V}, h^{N_{W/V}}) 
\ch (\xi',h^{\xi'})
\widetilde{\td }(TW, TV|_{W}, h^{TV})\\
 -\int_{V}\td (TV) R(TV) \ch (\xi) (1-\ch ([-W]))
 +\int_{W}\td (TW) R(TW) \ch (\xi'),
\end{multline}
\begin{multline*}
\log (||\rho_d||^2_{\lambda_d(\pi^* \xi)  \otimes 
\lambda^{-1}(R^{\bullet} \pi_{W*}\xi')})=
- \int _V \td (TV, h^{TV}) T(\xi_2, h^{\xi_2})\\
+\int_S \td ^{-1}(N_{S/V}, h^{N_{S/V}})
\widetilde{\td }(TS, TV|_{S}, h^{TV})
 \ch (R^{\bullet} \pi_{W*}\xi', 
\oplus_i h^{L^{-i}}\otimes h^{\xi})\\
-\int_{V}\td (TV) R(TV) \ch (\xi) (1-\ch ([-dS]))
 +\int_S \td (TS)R(TS) 
\ch (R^{\bullet} \pi_{W*}\xi').
\end{multline*}

By \cite[Remark 3.5 and Theorem 3.17]{BGS90a}, 
\begin{align}\label{eq:3.16a}
\begin{split}
T(\xi_1, h^{\xi_1})&= \pi^*  ( \ch (\xi,h^{\xi}))
\td ^{-1}([W], h^{[W]}) \log ||\tau_{[W]}||^2_{h^{[W]}}
 \quad   \text{in} \quad P^V_W/P^{V,0}_W ,\\
T(\xi_2, h^{\xi_2})&= \pi^*  ( \ch (\xi,h^{\xi}))
\ch (\oplus_{i=0}^{d-1} [-iS], \oplus h^{[-iS]})
\td ^{-1}([S], h^{[S]}) \log ||\tau_{[S]}||^2_{h^{[S]}}\\
&= \pi^*  ( \ch (\xi,h^{\xi}))\td ^{-1}([dS], h^{[dS]}) 
\log ||\tau_{[S]}^{d}||^2_{h^{[dS]}} 
\quad  \text{in} \quad P^V_S/P^{V,0}_S.
\end{split}
\end{align}

% By \eqref{eq:2.2}, \eqref{eq:3.9}, we have 
% \begin{multline}\label{eq:3.15}
% \int_V \td (TV, h^{TV})\td ^{-1}([W], h^{[W]}) 
% \log (||\tau_{[W]}||^2/||\tau_{[S]}||^{2n})\\
%  = \int_V \widetilde{\td }(TV, TS, h^{TV}, h^{TS})
%  \td ^{-1}([W], h^{[W]}) 
% \frac{\partial \overline{\partial}}{2i\pi} \log |f(\alpha)|^2\\
%  + \int_S \td (TS, h^{TS}) 
% \int_Y \td (TY, h^{TY}) \td ^{-1}([W], h^{[W]}) \log |f(\alpha)|^2.
% \end{multline}

By \eqref{eq:1.7a}, \eqref{eq:2.12} and \eqref{eq:3.2a}, we have
\begin{align}\label{eq:3.17a}
\td ^{-1}(N_{S/V}, h^{N_{S/V}})
 \ch (R^{\bullet} \pi_{W*}\xi', \oplus_i h^{L^{-i}}\otimes h^{\xi})
= \Big(\frac{1-e^{-dx}}{ x}\Big) (L, h^{[S]})  \ch (\xi,h^{\xi}).
\end{align}
% By \cite[\S 1f)]{BGS88a}, \eqref{eq:3.8}, \eqref{eq:3.10}, 
% on $P^S/P^{S,0}$, we have
% \begin{multline}\label{eq:3.16}
% \widetilde{\td }(TS, TV|_{S}, h^{TV|_{S}})  
% -\widetilde{\td }(TV, TS, h^{TV}, h^{TS}) \\
% = \td (TS, h^{TS}) \widetilde{\td }(TY, h^{TY}, h^{N_{S/V}}).
% \end{multline}

% Recall that we have identified $\lambda_d(\pi^* \xi)$
% to $\lambda'_d(\pi^* \xi)$ by \eqref{eq:2.32}. 
By Propositions \ref{p2.5}, \ref{p2.6}, 
and our identification of $\lambda_d(\pi^* \xi)$
to $\lambda'_d(\pi^* \xi)$ by \eqref{eq:2.32}, we  have 
\begin{multline}\label{eq:3.17}
||\sigma||^2_{\lambda(\xi') \otimes \lambda^{-1}(R\pi_{W*}\xi')}
=
(||\sigma_1||^2_{\lambda'_d(\pi^* \xi) \otimes 
\lambda^{-1}(\xi')})^{-1}\\
\cdot ||\rho_d||^2_{\lambda_d(\pi^* \xi) \otimes 
\lambda^{-1}(R^{\bullet} \pi_{W*}\xi')}
\frac{\|\quad \|_{\lambda'_d(\pi^* \xi)}^{2}}
{\|\quad \|_{\lambda_d(\pi^* \xi)}^{2}}.
\end{multline}

From Lemma \ref{d3.4},  \eqref{eq:3.20a}-\eqref{eq:3.17}, 
we deduce \eqref{eq:3.13}.  
\end{proof}

\begin{rem} \label{t3.3a}  From $V= \PP(L\oplus 1)$, as holomorphic 
	vector bundles on $S$, we have
\begin{align}\label{eq:3.18a}
TV|_{S}= TS \oplus L, \quad \text{and }  TY|_{S}= L \simeq N_{S/V}.
\end{align}
Starting from a metric on $L$, by using the first Chern form of 
$\mO_{V}(1)$ and a K\"ahler metric on $S$, we can construct a 
K\"ahler metric on $V$ such that  (\ref{eq:3.18a}) is an isometry 
with induced metrics on $TS$, $TY$.
Under this assumption, %the exact sequence 
 (\ref{eq:3.8}) splits with metrics as in
 (\ref{eq:3.18a}), thus
 \begin{align}\label{eq:3.19a}
\widetilde{\td }(TS, TV|_{S}, h^{TV}) =0.
\end{align}
% We can choose $h^{TV}$ such that 
% the map $TY\to N_{S/V}$ is an isometry on $S$.
	\end{rem} 
%%%%%%%%%%%%%%%%%%%%%%%%%%%%%%%%%%%%%%%
\section{Comparison formula for equivariant Quillen metrics}\label{s4}

$\quad$ In the sequel, we suppose that for $1\leq i \leq d-1$, 
$\alpha_i =0$ in \eqref{eq:1.1}. So
\begin{align}\label{eq:4.1}
W= \Big \{ (x,t)\in L: t^d + \alpha_d (x) =0 \Big \}.
\end{align}

Let $G= { \Z /d \Z } 
= \{0, \overline 1, \cdots, \overline{d-1} \}$. In this case, the group $G$
acts naturally on $V$. The action of $G$ is defined by : 
for $g=\overline 1$,
$(t,u) \in L\oplus \C$, the homogeneous coordinate of $V$.
\begin{align}\label{eq:4.2}
g \cdot (t,u) = (e^{i {2\pi/d}} t,u).
\end{align}
Then $G$ preserves $W$, and $S= W/G$.
Let $G$ act on $\mO _V$ by
\begin{align}
g \cdot f(\cdot)= f(g^{-1}\cdot),\quad   \text{for}\quad  g\in G,\, 
f\in \mO _V. \nonumber
\end{align}

Let $G$ act trivially on $\xi$. Then $G$ acts also on $\xi'$.
Let  $G$ act on $L$ by  following : for $g= \overline 1$, $t\in L$,
\begin{align}\label{eq:4.3}
g \cdot t = e^{i {2\pi /d}} t.
\end{align}
Then it induces also an action on $L^{-i}, \pi^* L$.

If given $W\in \widehat{G}$,  $\lambda_W, \mu_W$ are complex lines, if 
$\lambda= \oplus_{W\in \widehat{G}} \lambda_W$, 
$\mu= \oplus_{W\in \widehat{G}} \mu_W$, set
\begin{align}\label{eq:4.4}
\lambda^{-1}= \bigoplus_{W\in \widehat{G}} \lambda^{-1}_W,\quad 
\lambda \otimes \mu 
= \bigoplus_{W\in \widehat{G}} \lambda_W \otimes \mu_W. 
\end{align}

Let $ \lambda_G(\xi')$, $\lambda_G(R^{\bullet}\pi_{W*} \xi')$ 
be the inverse 
 of the equivariant determinant of the cohomology of  
$\xi'$ and $R^\bullet  \pi _*\xi'$ on $W$, $S$ \cite[\S 2]{B95}. 
Then $ \lambda_G(\xi')$ (resp. $\lambda_G(R^{\bullet}\pi_{W*} \xi')$)  
is a direct sum of complex lines.  As in \cite{B95}, \cite{KnudsenMumford}, 
we  have a canonical isomorphism of direct sums of complex lines
\begin{align}\label{eq:4.5}
\lambda_G(\xi') \simeq \lambda_G(R^{\bullet}\pi_{W*} \xi').
\end{align}
Let $\sigma_G$ be the canonical nonzero section of 
$\lambda_G(\xi') \otimes  \lambda_G^{-1} (R^{\bullet}\pi_{W*} \xi')$.

Let $h^{TV}$ be a $G$-invariant K\"ahler metric on $V$
(cf. Remark \ref{t3.3a} for the existence).
  
We provide the $G$-invariant Hermitian metrics 
 $h^{[S]}$, $h^{[W]}$, $h^{\xi}$ 
on $ [S], [W],\xi$ such that (\ref{eq:3.2a}) are isometries. 
Then they determine the $G$-equivariant Quillen 
metrics $||\quad ||_{\lambda_G(\xi')}$, 
$||\quad ||_{ \lambda_G(R^{\bullet}\pi_{W*} \xi')}$ on the equivariant 
determinants $ \lambda_G(\xi')$, $\lambda_G(R^{\bullet}\pi_{W*} \xi')$
\cite[\S 2a)]{B95}.

By our constructions, \eqref{eq:2.13}, \eqref{eq:2.25}, \eqref{eq:2.35} 
are $G$-equivariant 
exact sequences of sheaves. And the splits of \eqref{eq:2.14}, 
\eqref{eq:2.26} are also $G$--equivariant. 
Set 
\begin{align}\label{eq:4.6}\begin{split}
\lambda'_{d,G}(\pi^* \xi)&= \lambda_G(\pi^* \xi) \otimes 
\lambda ^{-1}_G([-W]\otimes \pi^* \xi), \\
\lambda_{d,G}(\pi^* \xi) 
&= \lambda_G(\oplus_{i=1}^{d} [-iS] \otimes \pi^* \xi)
\otimes \lambda ^{-1}_G(\oplus_{i=0}^{d-1} [-iS] \otimes \pi^* \xi).
\end{split}\end{align}
 As in \cite{KnudsenMumford}, \cite[\S 3b)]{B95}, 
 by \eqref{eq:2.13}, \eqref{eq:2.25}, \eqref{eq:2.35}, we have 
 the canonical isomorphisms of direct sums of complex lines
\begin{align}\label{eq:4.7}
\lambda_G(\xi')\simeq  \lambda'_{d,G}(\pi^* \xi),
\quad  \lambda_G(R^{\bullet}\pi_{W*} \xi')
=  \lambda_G(\oplus_{i=0}^{d-1} L^{-i} \otimes \xi)
\simeq  \lambda_{d,G}(\pi^* \xi).
\end{align}
Let $\sigma_{1,G}, \rho_{d,G}$ be the canonical sections of 
$\lambda_G^{-1}(\xi') \otimes  \lambda'_{d,G}(\pi^* \xi)$, 
$\lambda_G ^{-1}(\oplus_{i=0}^{d-1} L^{-i} \otimes \xi) 
\otimes \lambda_{d,G}(\pi^* \xi)$.

We denote $\Sigma = W \cap S= \{x\in S :  \alpha_d(x) =0\}$.
As we suppose that $W$ is a manifold, we know that $\Sigma$
is a submanifold of $S$ and $\partial \alpha_{d}(x)\neq 0$
for any $x\in \Sigma$.
If $g\in G$, set 
\begin{align}\label{eq:4.8}
V^g=\{x\in V: gx=x\},\qquad
W^g = \{x\in W: g x =x\}.
\end{align}
If $g\neq \overline{0}$, then $V^g=S \cup \PP(L)$, $W^g= \Sigma$.

Let $\td _g (TV, g^{TV})$ be the Chern-Weil Todd form on $V^g$ 
associated to the holomorphic Hermitian connection on
 $(TV, h^{TV})$ \cite[\S 2a)]{B95}, which appears in the Lefschetz
 formulas of Atiyah-Bott \cite{ABo67}. Other Chern-Weil form
 will be denoted in a similar way. 
In particular, the forms $\ch _g(\xi_1, h^{\xi_1})$ on $V^g$ 
is the Chern-Weil representative of the $g$-Chern character form of 
$(\xi_1, h^{\xi_1})$. Also, we denote by 
$\td _g (TV), \ch _g(\xi_1) \cdots $
the cohomology classes of $\td _g (TV, g^{TV})$, 
$\ch _g(\xi_1, h^{\xi_1})$ $\cdots$ on $V^g$.

Let $R(\theta, x)$ be the power series in \cite[(7.39)]{B94},
 \cite[(7.43)]{B95}, which verifies $R(0, x)= R(x)$. 
Let $R_g(TV), \cdots$ be the corresponding 
additive genera  \cite[\S 7c)]{B94}, \cite[\S 7g)]{B95}.

Let $h^{T\Sigma}$
be the metric on $T\Sigma$ induced by $h^{TS}$.
Let $h^{N_{\Sigma/S}}$ %${h'}^{N_{S/V}}$
be the metrics on $N_{\Sigma/S}$ induced by $h^{TV}$.
As smooth vector bundles on $\Sigma$, we have the $G$-equivariant 
orthogonal splitting 
\begin{align}\label{eq:4.8a}
TV|_{\Sigma} = T\Sigma\oplus N_{\Sigma/S}\oplus N_{S/V}
= T\Sigma \oplus N_{\Sigma/W}\oplus N_{W/V},
\end{align}
as $G$ acts trivially on $T\Sigma$, $N_{\Sigma/S}$, and nontrivially 
on $N_{S/V}, N_{\Sigma/W}$, we conclude that
\begin{align}\label{eq:4.9a}\begin{split}
N_{\Sigma/S}= N_{W/V}, \quad
h^{N_{\Sigma/S}}= h^{N_{W/V}}, \quad 
N_{\Sigma/W}= N_{S/V}, \quad
h^{N_{\Sigma/W}}= h^{N_{S/V}} \quad \text{on } \Sigma.
\end{split}\end{align}

% On $S$, we have the exact sequence of holomorphic 
% Hermitian vector bundles
% \begin{align}\label{eq:4.9}
% 0\to T\Sigma \to TS \to N_{\Sigma/S} \to 0.
% \end{align}
% Let $\widetilde{\td }(T\Sigma, TS, h^{TS}) \in P^S/P^{S,0}$ 
% be the corresponding Bott-Chern class such that
% \begin{align}\label{eq:4.10}
% \displaystyle{\frac{ \overline{\partial} \partial}{2i\pi}
% \widetilde{\td }(T\Sigma, TS, h^{TS})
% = \td (TS, h^{TS})- \td (T\Sigma, h^{T\Sigma})
% \td (N_{\Sigma/S}, h^{N_{\Sigma/S}}).}
% \end{align}

\begin{thm}\label{t4.1} For 
 $ g = \overline{j}\,  (0< j \leq d-1)$, the following identity holds 
\begin{multline}\label{eq:4.11}
\log (||\sigma_G|| ^2_{\lambda_G(\xi')
 \otimes \lambda^{-1}_G(R\pi_{W*}\xi')})(g)\\
=\int_S \td (TS, h^{TS}) \td_{g} (N_{S/V}, h^{N_{S/V}})
\td ^{-1}([W], h^{[dS]})\ch (\xi,h^{\xi})
\log ||\alpha_d||^2_{h^{[dS]}}\\
% +\int_S \td (TS, h^{TS}) \Big [ 1-e^x \Big ] ([dS], h^{[dS]}) 
% \widetilde{\td }(N_{S/V}, {h}^{N_{S/V}}, {h'}^{N_{S/V}})
% \ch (\xi,h^{\xi})\\
-\int_{\Sigma} \td ^{-1}(N_{W/V}, h^{N_{W/V}}) 
\td _g(N_{S/V}, h^{N_{S/V}})
\widetilde{\td }(T\Sigma, TS|_{\Sigma}, h^{TS})\ch (\xi,h^{\xi})\\
+ \int_{\Sigma} \td (TS, h^{TS})\td_{g} (N_{S/V}, h^{N_{S/V}})
\widetilde{\td^{-1} }(N_{W/V}, h^{[dS]}, h^{N_{W/V}})\ch 
(\xi,h^{\xi})\\
+ \int_S \td (TS)R(TS)\ch (\xi) 
\ch _g(R^{\bullet} \pi_{W*} \mO _W)
-\int_{\Sigma}\td _g(TW) R_g(TW) \ch (\xi).
\end{multline}
\end{thm}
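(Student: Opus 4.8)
The plan is to re-run the proof of Theorem~\ref{t3.2} equivariantly, replacing the Bismut--Lebeau formula by Bismut's equivariant embedding formula \cite[Theorem 0.1]{B95}. First I would record the $G$-equivariant factorization $\sigma_G=\sigma_{1,G}^{-1}\otimes\rho_{d,G}$ together with an equivariant metric-comparison factor; this is the exact analogue of \eqref{eq:3.17} and follows from the $G$-equivariant versions of Propositions~\ref{p2.5} and \ref{p2.6}, the splittings of \eqref{eq:2.14}, \eqref{eq:2.26}, \eqref{eq:2.35} being $G$-equivariant by construction (and in fact trivial here, since $\alpha_i=0$ for $i<d$). Thus $\log\|\sigma_G\|^2(g)=-\log\|\sigma_{1,G}\|^2(g)+\log\|\rho_{d,G}\|^2(g)+\log(\text{ratio})(g)$, and I apply \cite[Theorem 0.1]{B95} to each of the complexes $(\xi_1,v)$, $(\xi_2,v)$ of \eqref{eq:3.4}, together with the equivariant anomaly formula for the metric-comparison factor.

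The decisive geometric input is $V^g=S\cup\PP(L)$ and $W^g=\Sigma$. Using the freedom in $h^{[W]}$ away from $W$ (legitimate by the metric-independence in Lemma~\ref{d3.4}), I would take $h^{[W]}=h^{[dS]}$ near $\PP(L)$; since $W\cap\PP(L)=\emptyset$ and $f(\alpha)\equiv 1$ on $\PP(L)$ by \eqref{eq:2.4}, all currents and anomaly forms then vanish there, so the component $\PP(L)$ contributes nothing. On the component $S$ I compute the equivariant currents $T_g(\xi_1)$, $T_g(\xi_2)$ as in \eqref{eq:3.16a}: under the identification $[S]|_L=\pi^*L$ one has $\tau_{[S]}=t$, so by \eqref{eq:2.4} $\tau_{[W]}=t^d+\alpha_d$ and hence $\tau_{[W]}|_S=\alpha_d$; with $\td_g(TV)|_S=\td(TS)\,\td_g(N_{S/V})$ (as $G$ acts trivially on $TS$ and, by \eqref{eq:1.7a}, nontrivially on $N_{S/V}\simeq L$), this produces the first term of \eqref{eq:4.11}, carrying $\log\|\alpha_d\|^2_{h^{[dS]}}$.

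Next I would treat the submanifold and $R$-genus terms. On $W^g=\Sigma$ the splittings \eqref{eq:4.8a}--\eqref{eq:4.9a} give $\widetilde{\td}_g(TW,TV|_W)|_\Sigma=\td_g(N_{S/V})\,\widetilde{\td}(T\Sigma,TS|_\Sigma)$, the $G$-nontrivial summand $N_{S/V}$ being common to $TW$ and $TV$ along $\Sigma$; this yields the second term of \eqref{eq:4.11}. With the $G$-invariant K\"ahler metric of Remark~\ref{t3.3a} the sequence \eqref{eq:3.8} splits $G$-equivariantly and isometrically, so $\widetilde{\td}_g(TS,TV|_S)=0$ and the analogue of the $\widetilde{\td}(TS,TV|_S)$-term of Theorem~\ref{t3.2} drops out. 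The $R$-genus terms become the $R_g$-genera on $V^g$; the two ambient integrals cancel because $[W]=[dS]$ forces $\ch_g([-W])=\ch_g([-dS])$ in cohomology, and the surviving contributions localize to $S$ (pointwise fixed, so $\td_g(TS)=\td(TS)$, $R_g(TS)=R(TS)$) and to $\Sigma=W^g$, giving the fourth and fifth terms of \eqref{eq:4.11}.

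The remaining third term is the genuinely new feature and the main obstacle. The embedding formula for $(\xi_1,v)$ is naturally computed with the isometry metric $h^{[W]}$, for which $h^{[W]}|_W=h^{N_{W/V}}$, whereas the first term and the $(\xi_2,v)$-side are written with $h^{[dS]}$; on $N_{W/V}=[W]|_W=[dS]|_W$ these metrics differ. Converting between them by the equivariant version of the metric-change argument of Lemma~\ref{d3.4} (cf.\ \eqref{eq:3.14a}), the correction is the $\delta_{\{W\}}$-supported class $\widetilde{\td^{-1}}(N_{W/V},h^{[dS]},h^{N_{W/V}})$; upon equivariant localization its support meets $V^g$ only along $W\cap V^g=\Sigma$, and weighted by $\td_g(TV)|_\Sigma=\td(TS)\td_g(N_{S/V})$ it gives exactly the third term of \eqref{eq:4.11}. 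The delicate points I expect to fight with are: controlling the equivariant Bott--Chern currents on the component $S$, where $\tau_{[S]}$ vanishes and $S$ meets $W$ along $\Sigma$; confirming the complete vanishing of the $\PP(L)$-contributions, including that of the anomaly class $\widetilde{\ch}_g([-dS],h^{[-dS]},h^{[-W]})$; and tracking the single metric-change correction through the equivariant anomaly formula so that precisely the third term survives with the correct sign. Assembling these with the cancellation of the ambient $R_g$-integrals then gives \eqref{eq:4.11}.
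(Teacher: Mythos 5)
Your route is essentially the paper's own: the equivariant factorization of $\sigma_G$ into $\sigma_{1,G}^{-1}\otimes\rho_{d,G}$ times an anomaly ratio, two applications of \cite[Theorem 0.1]{B95} to the complexes $(\xi_1,v)$, $(\xi_2,v)$, localization to $V^g=S\cup\PP(L)$ and $W^g=\Sigma$, the splittings \eqref{eq:4.8a}--\eqref{eq:4.9a} for the second term, the metric change from $h^{[W]}$ to $h^{[dS]}$ on $N_{W/V}$ producing the third term as a $\delta_{\{\Sigma\}}$-supported correction, and the vanishing of $\widetilde{\td}_g(TS,TV|_{S},h^{TV})$. There is, however, one genuine gap: on the fixed component $S\subset V^g$ you propose to ``compute $T_g(\xi_2)$ as in \eqref{eq:3.16a}'', but \eqref{eq:3.16a} represents $T(\xi_2,h^{\xi_2})$ by $\pi^*\ch(\xi,h^{\xi})\,\td^{-1}([dS],h^{[dS]})\log\|\tau_{[S]}^{d}\|^2_{h^{[dS]}}$ modulo $P^{V,0}_S$, and the component $S$ is precisely the singular support of this current, on which $\tau_{[S]}\equiv 0$; the formula degenerates to $\log 0$ and cannot be restricted there. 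The statement actually needed is $T_g(\xi_2,h^{\xi_2})=0$ on $S$ (second line of \eqref{eq:4.14}), which the paper obtains by going back to the definition: the restriction of $(\xi_2,v)$ to $S$ has $v=\tau_{[S]}=0$, so the form $\tr_{s}[gN_H\exp(-C_u^2)]$ entering the equivariant Bott--Chern current is independent of $u$ and the zeta-regularized current vanishes. You list this as a ``delicate point'' but do not supply the argument, and without it the first term of \eqref{eq:4.11} is not isolated.

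A second, lesser issue concerns $\PP(L)$. Choosing $h^{[W]}=h^{[dS]}$ near $\PP(L)$ is legitimate, but your claim that ``all currents and anomaly forms then vanish there'' is not correct termwise: on $\PP(L)$ the currents $T_g(\xi_1)$ and $T_g(\xi_2)$ are the Bott--Chern classes $\wi{\ch}([-W]|_{\PP(L)},h^{\mO_V},h^{[-W]})\ch(\xi,h^{\xi})$ and $\sum_{i=1}^{d}\wi{\ch}([-iS]|_{\PP(L)},h^{[-(i-1)S]},h^{[-iS]})\ch(\xi,h^{\xi})$ of \eqref{eq:4.15}, which compare the chosen metrics with the trivializations by $\tau_{[W]}$ and $\tau_{[S]}$ and do not vanish for your choice. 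What is true --- and what the combination entering \eqref{eq:4.13} requires --- is that $T_g(\xi_1)-T_g(\xi_2)-\wi{\ch}_{g}([-dS],h^{[-dS]},h^{[-W]})\ch(\xi,h^{\xi})$ vanishes in $P^{\PP(L)}/P^{\PP(L),0}$, which follows from the telescoping identity \eqref{eq:4.17} together with $f(\alpha)\equiv 1$ on $\PP(L)$. Your conclusion that $\PP(L)$ contributes nothing is right, but it rests on this cancellation rather than on each term vanishing.
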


\begin{proof}
		By the anomaly formula \cite[Theorem 2.5]{B95}, we have 
	\begin{align}\label{eq:4.11a}
\log \Big(\frac{\|\quad \|_{\lambda'_d(\pi^* \xi)}^{2}}
{\|\quad \|_{\lambda_d(\pi^* \xi)}^{2}}\Big) (g)
= - \int_{S\cup \PP(L)} \td_{g} (TV, h^{TV}) 
\wi{\ch}_{g}( [-dS], h^{[-dS]}, h^{[-W]}).
\end{align}

 By applying  \cite[Theorem 0.1]{B95} to \eqref{eq:2.13}, \eqref{eq:2.36}, 
 we have
\begin{multline}\label{eq:4.12}
\log (||\sigma_{1,G}||^2
_{\lambda'_{d,G}(\pi^* \xi) \otimes \lambda^{-1}_G(\xi')})(g)=
- \int _{S\cup \PP(L)} \td _g(TV, h^{TV}) T_g(\xi_1, h^{\xi_1})\\
+ \int_{\Sigma} \td ^{-1}_g(N_{W/V}, h^{N_{W/V}}) 
\ch _g(\xi,h^{\xi})
\widetilde{\td }_g(TW|_{\Sigma}, TV|_{\Sigma}, h^{TV})\\
 -\int_{S\cup \PP(L)}  \td _g(TV) R_g(TV) \pi^{*}\ch (\xi) 
 (1-\ch_{g}([-W]))
+\int_{\Sigma}\td _g(TW) R_g (TW) \ch _g(\xi).
\end{multline}
\begin{multline*}
\log (||\rho_{d,G}||^2_{\lambda_{d,G}(\pi^* \xi) \otimes 
\lambda^{-1}_G(R^{\bullet} \pi_{W*}\xi)})(g)= 
- \int _{S\cup \PP(L)} \td _g(TV, h^{TV}) T_g(\xi_2, h^{\xi_2})\\
+\int_S \td ^{-1}_g(N_{S/V}, h^{N_{S/V}})
\widetilde{\td }_g(TS, TV|_{S}, h^{TV})
 \ch (\xi, h^{\xi})
 \ch _g(R^{\bullet} \pi_{W*} \mO _W, \oplus h^{[-iS]})\\
 -\int_{S\cup \PP(L)}  \td _g(TV) R_g(TV) \pi^{*}\ch (\xi) 
 (1-\ch_{g}([-dS]))\\
+\int_{S}  \td _g(TS)R_g(TS) \pi^{*}\ch (\xi) 
\ch _g(R^{\bullet} \pi_{W*} \mO _W).
\end{multline*}

In this case, since the identifications in Section \ref{s2}  is $G$-equivariant,
 as (\ref{eq:3.17}), we have
\begin{multline}\label{eq:4.13}
 \|\sigma_{G}\|^2_{\lambda_G(\xi')
 \otimes \lambda^{-1}_G(R\pi_{W*}\xi')}(g) 
 = \Big\{(\|\sigma_{1,G}\|^2_{\lambda' _{d,G}(\pi^* \xi) \otimes 
\lambda^{-1}_G(\xi')})^{-1}\\
\cdot \|\rho_{d,G}\|^2_{\lambda_{d,G}(\pi^* \xi) \otimes 
\lambda^{-1}_G(R^{\bullet} \pi_{W*}\xi')}
\frac{\|\quad \|_{\lambda'_d(\pi^* \xi)}^{2}}
{\|\quad \|_{\lambda_d(\pi^* \xi)}^{2}} \Big\}(g). 
\end{multline}

Note that by (\ref{eq:1.6}), $g=\ov{j}$ acts on $[S]$ as multiplication by 
$e^{i2\pi j/d}$ on $S= V^{g}\cap S$, and $g$ acts on $[W]|_{S}$
as $g^{d}= \Id$.
By \cite[\S 6b)]{B95}, \cite[Theorem 3.17]{BGS90a}, 
\eqref{eq:3.4} and  \eqref{eq:4.8}, 
 on $S= V^g \cap S$, we calculate easily 
\begin{align}\label{eq:4.14}\begin{split}
T_g(\xi_1, h^{\xi_1})=&  \ch (\xi,h^{\xi})
\td ^{-1}([W], h^{[W]}) \log ||\alpha_d||^2_{h^{[W]}} 
 \qquad \text{in }\quad  P^S_{\Sigma}/P^{S,0}_{\Sigma}, \\
T_g(\xi_2, h^{\xi_2})=& 0  \qquad \text{in }\quad P^S/P^{S,0}.
\end{split}\end{align}
In the second equation of (\ref{eq:4.14}), we use $\tau_{[S]}=0$ on
$S^{g}= V^{g}\cap S$, thus the form 
$\tr_{s}[g N_{H} \exp(-C_{u}^{2})]$ in the definition 
of $T_g(\xi_2, h^{\xi_2})$ does not depend on $u$, and automatically 
$T_g(\xi_2, h^{\xi_2})$ vanishes.

%As $g$ acts on $[S]$ as multiplication by $g$ on $S=V^{g}\cap S$,
As explain above,  on $S\cup \PP(L)$, $g$ acts as identity on $[dS]= [W]$,
and  by the argument in (\ref{eq:3.14a}), we know 
in $P^S_{\Sigma}/P^{S,0}_{\Sigma}$,
\begin{multline}\label{eq:4.14a}
\td ^{-1}([W], h^{[W]}) \log ||\alpha_d||^2_{h^{[W]}} 
-\wi{\ch}_{g}( [-dS], h^{[-dS]}, h^{[-W]})\\
- \td ^{-1}([W], h^{[dS]}) \log ||\alpha_d||^2_{h^{[dS]}} \\
= \wi{\td^{-1}} ( [W],    h^{[dS]}, h^{[W]} )\delta_{\{\Sigma\}}
= \wi{\td^{-1}} ( [W],   h^{[dS]}, h^{N_{W/V}} ) \delta_{\{\Sigma\}}.
\end{multline}

On $\PP(L)$, by \eqref{eq:1.6}, $g$ acts on $[S]$ as identity, and 
by \eqref{eq:1.5},  we have 
\begin{align}\label{eq:4.16}
[-W]= [-S]= \mO _{\PP(L)}  \quad \text{ on } \PP(L).
\end{align}
%and $\tau_{[W]}, \tau_{[S]}$ are identity under identification on $\PP(L)$.
By using  \cite[\S 6b)]{B95},  we have also
\begin{align}  \label{eq:4.15} \begin{split}
T_g(\xi_1, h^{\xi_1})&=
 \widetilde{\ch }\Big([-W]|_{\PP(L)},h^{\mO _V}, h^{[-W]} \Big)
\ch (\xi,h^{\xi})  \quad \text{in } \,\, P^{\PP(L)}/P^{\PP(L),0},\\
T_g(\xi_2, h^{\xi_2})&=\sum_{i=1}^d 
\wi{\ch}\Big([-iS]|_{\PP(L)},  h^{[-(i-1)S]},h^{[-iS]}\Big)
\ch (\xi,h^{\xi})
\quad \text{in } \,\, P^{\PP(L)}/P^{\PP(L),0}.
\end{split}\end{align}
 From \eqref{eq:4.16}, we have 
\begin{multline}\label{eq:4.17}  
\sum_{i=1}^d \widetilde{\ch }\Big([-iS]|_{\PP(L)},
 h^{[-(i-1)S]},  h^{[-iS]}\Big)
+ \wi{\ch}_{g}( [-dS], h^{[-dS]}, h^{[-W]})\\
 =\widetilde{\ch }\Big([-W]|_{\PP(L)}, h^{\mO _V},  h^{[-W]}\Big)
  \quad \text{in } \,\, P^{\PP(L)}/P^{\PP(L),0}.
\end{multline}

Since $g$ acts on $N_{W/V}= [W]$ on $\Sigma$ as $\Id$, we have 
\begin{align}\label{eq:4.18}
 \td ^{-1}_g(N_{W/V}, h^{N_{W/V}})
 = \td ^{-1}(N_{W/V}, h^{N_{W/V}})
\quad\mbox{on} \quad \Sigma.
\end{align}

The restriction of the exact sequence \eqref{eq:3.6} on $\Sigma$
is split as in  \cite[(6.8)]{B95} to two  following exact sequences 
\begin{align}\label{eq:4.19}\begin{split}
0\to T\Sigma \to TS \to N_{\Sigma/S} \to 0,\quad
0\to N_{\Sigma/W} \to N_{S/V} \to 0 \to 0.
\end{split}\end{align}
By \eqref{eq:3.5}, \eqref{eq:4.9a} and \eqref{eq:4.19},
we have   
\begin{align}\label{eq:4.20}\begin{split}
\widetilde{\td }_g(TW|_{\Sigma}, TV|_{\Sigma}, h^{TV|_{\Sigma}}) 
&= \td _g(N_{S/V}, {h}^{N_{S/V}}) 
\widetilde{\td }(T\Sigma, TS|_{\Sigma}, h^{TS})
\quad \text{in} \,\, P^{\Sigma}/ P^{\Sigma,0},\\
 \td _g(TV, h^{TV}) &= \td _g(N_{S/V}, {h}^{N_{S/V}})
\td (TS, h^{TS}) \quad \text{on} \quad S.
\end{split}\end{align}

% The restriction of the exact sequence \eqref{eq:3.8} on $S$
% is split to two following exact sequences 
% \begin{align}\label{eq:4.21}\begin{split}
% &0\to TS \to TS \to 0\to 0,\\
% &0\to  0 \to N_{S/V} \to N_{S/V} \to 0,
% \end{split}\end{align}
% and they are identity maps, so
As \eqref{eq:3.8} splits $G$-equivariantly and isometrically, as 
in (\ref{eq:3.18a}), we get
\begin{align}\label{eq:4.22}
 \widetilde{\td }_g(TS, TV|_{S}, h^{TV}) = 0 
\quad \text{in} \quad P^{S}/P^{S,0}. 
\end{align}
% Moreover
% \begin{align}\label{eq:4.23}
% \td ^{-1}_g(N_{S/V}, h^{N_{S/V}})
%  \ch _g(R^{\bullet} \pi_{W*} \mO _W, \oplus h^{L^{-i}})
% = (1-e^x)([dS], h^{[dS]})|_{S}.
% \end{align}

By \eqref{eq:4.11a}-\eqref{eq:4.22}, we have \eqref{eq:4.11}.  
\end{proof}

%\bibliography{Bismut2,Others2,Ma}

\begin{thebibliography}{10}

\bibitem{ABo67} M.~F. Atiyah and R.~Bott, 
\emph{A {L}efschetz fixed point formula for elliptic
  complexes. {I}}, Ann. of Math. (2) \textbf{86} (1967), 374--407.

\bibitem{BerB94}
A.~Berthomieu and J.-M. Bismut, \emph{Quillen metrics and higher analytic
  torsion forms}, J. Reine Angew. Math. \textbf{457} (1994), 85--184.

\bibitem{B94}
J.-M. Bismut, \emph{Equivariant short exact sequences of vector bundles and
  their analytic torsion forms}, Compositio Math. \textbf{93} (1994), no.~3,
  291--354.

\bibitem{B95} J.-M. Bismut, 
\emph{Equivariant immersions and {Q}uillen metrics}, J. Differential
  Geom. \textbf{41} (1995), 53--157.

\bibitem{B97} J.-M. Bismut, 
 \emph{Holomorphic families of immersions and higher analytic torsion
  forms}, Ast\'erisque (1997), no.~244, viii+275 pp.
  
\bibitem{B97b} J.-M. Bismut,
\emph{Quillen metrics and singular fibres in arbitrary relative dimension}. 
J. Algebraic Geom., \textbf{6} (1997), 19-149.

\bibitem{BGS88a}
J.-M. Bismut, H.~Gillet, and C.~Soul{\'e}, \emph{Analytic torsion and
  holomorphic determinant bundles. {I}. {B}ott-{C}hern forms and analytic
  torsion}, Comm. Math. Phys. \textbf{115} (1988), no.~1, 49--78.

\bibitem{BGS88c} J.-M. Bismut, H.~Gillet, and C.~Soul{\'e},  
\emph{Analytic torsion and holomorphic determinant bundles. {I}{I}{I}.
  {Q}uillen metrics on holomorphic determinants}, Comm. Math. Phys.
  \textbf{115} (1988), no.~2, 301--351.

\bibitem{BGS90b} J.-M. Bismut, H.~Gillet, and C.~Soul{\'e}, 
\emph{Bott-{C}hern currents and complex immersions}, Duke Math. J.
  \textbf{60} (1990), no.~1, 255--284.

\bibitem{BGS90a} J.-M. Bismut, H.~Gillet, and C.~Soul{\'e}, 
\emph{Complex immersions and {A}rakelov geometry}, The Grothendieck
  Festschrift, Vol.\ I, Progress in Mathematics, vol.~86, Birkh\"auser Boston,
  Boston, MA, 1990, pp.~249--331.

\bibitem{BL91} J.-M. Bismut and G.~Lebeau, 
\emph{Complex immersions and {Q}uillen metrics},
  Inst. Hautes \'Etudes Sci. Publ. Math. (1991), no.~74, ii+298 pp.

\bibitem{GRS08} H.~Gillet, D.~Roessler, and C.~Soul{\'e}, 
\emph{An arithmetic {R}iemann-{R}och theorem in higher degrees}, 
Ann. Inst. Fourier (Grenoble) \textbf{58} (2008),
  no.~6, 2169--2189.

\bibitem{GS91} H.~Gillet and C.~Soul{\'e}, 
\emph{Analytic torsion and the arithmetic {T}odd genus}, 
Topology \textbf{30} (1991), no.~1, 21--54, 
  With an appendix by D. Zagier.

\bibitem{GS92} H.~Gillet and C.~Soul{\'e},
\emph{An arithmetic {R}iemann-{R}och theorem}, Invent. Math.
  \textbf{110} (1992), 473--543.

\bibitem{Gom22} E. Gomezllata Marmolejo,  
The norm of a canonical isomorphism of determinant line bundles, 
Thesis, University of Oxford, 2022. 

\bibitem{GrauertRemmert84} H.~Grauert and R.~Remmert, 
\emph{Coherent analytic sheaves}, Grundlehren der
  mathematischen Wissenschaften,
  %[Fundamental Principles of Mathematical Sciences],
  vol. 265, Springer-Verlag, Berlin, 1984.

\bibitem{GriffithsHarris}
P.~Griffiths and J.~Harris, \emph{Principles of algebraic geometry},
  Wiley-Interscience [John Wiley \&\ Sons], New York, 1978, 
  Pure and Applied Mathematics.

\bibitem{Grot57} A.~Grothendieck, 
\emph{Sur quelques points d'alg\`ebre homologique}, T\^ohoku
  Math. J. (2) \textbf{9} (1957), 119--221.

\bibitem{Hartshorne77} R.~Hartshorne, 
\emph{Algebraic geometry}, Graduate Texts in Mathematics, No.
  52, Springer-Verlag, New York-Heidelberg, 1977.
  
 \bibitem{KingsRo17} G. Kings,  D. Roessler,  
 \emph{Higher analytic torsion, polylogarithms and 
 norm compatible elements on abelian schemes}. 
 Geometry, analysis and probability, 99–126, 
  Progr. Math., \textbf{310}, Birkhäuser/Springer, Cham, 2017.

\bibitem{KnudsenMumford}
F.F. Knudsen and D.~Mumford, \emph{The projectivity of the moduli space of
  stable curves. {I}. {P}reliminaries on ``det'' and ``{D}iv''}, Math. Scand.
  \textbf{39} (1976), no.~1, 19--55.

\bibitem{KRo01}
K.~K\"{o}hler and D.~Roessler, 
\emph{A fixed point formula of {L}efschetz type in {A}rakelov geometry {I}:
statement and proof}, Invent. Math. \textbf{145}
  (2001), no.~2, 333--396.

\bibitem{MRo09} V. Maillot, D.~Roessler, 
\emph{Formes automorphes et th\'eor\`emes de Riemann-Roch 
arithm\'etiques}. 
 % (French) [Automorphic forms and arithmetic Riemann-Roch theorems] 
  Ast\'erisque No. \textbf{328} (2009), 237–253 (2010).
  
  \bibitem{Ma002} X. Ma, 
  \emph{Submersions and equivariant Quillen metrics}. 
  Ann. Inst. Fourier (Grenoble), \textbf{50} (2000), 1539-1588.
  
\bibitem{Q85a} D.~Quillen, 
\emph{Determinants of {C}auchy-{R}iemann operators on {R}iemann
  surfaces}, Functional Anal. Appl. \textbf{19} (1985), no.~1, 31--34.

\bibitem{Shokurov94}
V.~V. Shokurov, \emph{Riemann surfaces and algebraic curves}, Algebraic
  geometry, {I}, Encyclopaedia Math. Sci., vol.~23, Springer, Berlin, 1994,
  pp.~1--166.
  
\bibitem{Tang21} S.  Tang,  
 \emph{An arithmetic Lefschetz-Riemann-Roch theorem}. 
  With an appendix by Xiaonan Ma.
  Proc. Lond. Math. Soc. \textbf{122} (2021), 377–433. 

\end{thebibliography}
%\bibliographystyle{amsplain}

\def\cprime{$'$} \def\cprime{$'$}
\providecommand{\bysame}{\leavevmode\hbox to3em{\hrulefill}\thinspace}
\providecommand{\MR}{\relax\ifhmode\unskip\space\fi MR }
% \MRhref is called by the amsart/book/proc definition of \MR.
\providecommand{\MRhref}[2]{%
  \href{http://www.ams.org/mathscinet-getitem?mr=#1}{#2}
}
\providecommand{\href}[2]{#2}

\end{document}